\newtheorem{remark}{Remark}
\newtheorem{example}{Example}
\tikzset{cross/.style={cross out, draw=black, minimum size=2*(#1-\pgflinewidth), inner sep=0pt, outer sep=0pt}, cross/.default={3pt}}
\definecolor{Gabriele}{rgb}{0, 0, 1}
\definecolor{Tommaso}{rgb}{1, 0, 0}
\newcommand{\op}[0]{{\rm op}}
\newcommand{\spa}{{\rm span}}
\newcommand{\Int}{{\rm int}}
\newcommand{\psib}{{\pmb \psi}}
\newcommand{\C}{{\mathbb{C}}}
\newcommand{\mbI}{{\mathbb{I}}}
\newcommand{\mbA}{{\mathbb{A}}}
\newcommand{\mbT}{{\mathbb{T}}}
\newcommand{\mbG}{{\mathbb{G}}}
\newcommand{\mbX}{{\mathcal{X}}}
\newcommand{\wmbX}{{\widehat{\mathcal{X}}}}
\newcommand{\wlambda}{{\widetilde{\lambda}}}
\newcommand{\wm}{\widetilde{m}}
\newcommand{\wt}{\widetilde{t}}
\newcommand{\wT}{\widetilde{T}}
\newcommand{\weps}{\widetilde{\varepsilon}}
\newcommand{\wla}{\widetilde{\lambda}}
\newcommand{\bareps}{\bar{\varepsilon}}
\newcommand{\Span}{\mathrm{span}}
\newsavebox{\@brx}
\newcommand{\llangle}[1][]{\savebox{\@brx}{\(\m@th{#1\langle}\)}%
  \mathopen{\copy\@brx\kern-0.5\wd\@brx\usebox{\@brx}}}
\newcommand{\rrangle}[1][]{\savebox{\@brx}{\(\m@th{#1\rangle}\)}%
  \mathclose{\copy\@brx\kern-0.5\wd\@brx\usebox{\@brx}}}
\newsavebox{\@brxx}
\newcommand{\llang}[1][]{\savebox{\@brxx}{(}%
  \mathopen{\copy\@brxx\kern-0.5\wd\@brxx\usebox{\@brxx}}}
\newcommand{\rrang}[1][]{\savebox{\@brxx}{)}%
  \mathclose{\copy\@brxx\kern-0.5\wd\@brxx\usebox{\@brxx}}}
\newcolumntype{x}[1]{>{\centering\arraybackslash}p{#1}}
\newcommand{\TheTitle}{Spectral Substructured Two-level Domain Decomposition Methods} 
\newcommand{\TheAuthors}{G. Ciaramella and T. Vanzan}
\title{{\TheTitle}}
\author{G. Ciaramella\thanks{Politecnico di Milano, Italy ({\tt gabriele.ciaramella@polimi.it}).}
	\and 
	T. Vanzan\thanks{CSQI, Ecole Polytechnique F\'{e}d\'{e}rale de Lausanne, Switzerland ({\tt tommaso.vanzan@epfl.ch}).}
}
\begin{document}
	
\maketitle
	
\begin{abstract} 
Two-level domain decomposition (DD) methods are very powerful techniques for the 
efficient numerical solution of partial differential equations (PDEs). 
A two-level domain decomposition method requires two main components: 
a one-level preconditioner (or its corresponding smoothing iterative method), 
which is based on domain decomposition techniques, and a coarse correction step, 
which relies on a coarse space. The coarse space must properly represent the error 
components that the chosen one-level method is not capable to deal with. 
In the literature most of the works introduced efficient coarse spaces obtained 
as the span of functions defined on the entire space domain of the considered PDE. 
Therefore, the corresponding two-level preconditioners and iterative methods are 
defined in volume.

In this paper, a new class of substructured two-level methods is introduced,
for which both domain decomposition smoothers and coarse correction steps
are defined on the interfaces (or skeletons). This approach has several advantages. 
On the one hand, the required computational effort is
cheaper than the one required by classical volumetric two-level methods.
On the other hand, it allows one to use some of the well-known efficient coarse spaces proposed in the literature.
While analyzing in detail the new substructured methods, we present a new convergence analysis for two-level iterative methods, which covers the proposed substructured framework. Further, we study the asymptotic optimality of coarse spaces both theoretically and numerically using deep neural networks. 
Numerical experiments demonstrate the effectiveness of the proposed new numerical framework.
\end{abstract}
%
%
\begin{AMS}
65N55, 65F10, 65N22, 35J57
\end{AMS}
\begin{keywords}
domain decomposition methods, Schwarz methods, substructured methods, two-level methods,
coarse correction, elliptic equations
\end{keywords}
\maketitle
\section{Introduction}
Consider a linear problem of the form $A u = f$, which we assume well posed in a vector space $V$.
To define a two-level method for the solution to this problem, a one-level method and
a coarse-correction step are required.

One-level methods are generally based on a splitting technique: 
the operator $A : V \rightarrow V$ is decomposed as $A = M - N$,
where $M : V \rightarrow V$ is assumed invertible. This splitting leads to a stationary iteration,
namely $u^{k+1} = M^{-1}N u^k + M^{-1} f$, for $k=0,1,\dots$, and to a preconditioned system $M^{-1}A u = M^{-1}f$. 
These are strongly related, since the stationary iteration, if it converges, produces the solution
of the preconditioned system; see, e.g., \cite{Gabriele-Martin} and references therein.
Notice that we have tacitly used the term ``method'' with different meanings. 
On the one hand, a stationary method is a fixed-point iteration method whose goal is to obtain
the solution $u$.
On the other hand, a preconditioner is a transformation method that aims at transforming
the considered system to a new better conditioned one. 
Indeed, when talking about preconditioning, it is always implicitly assumed that the preconditioned system
is solved by a Krylov iteration. Similarly, a Krylov method can be used to accelerate a stationary iteration method.
For one-level methods (based on the same operator $M$), a precise relation makes these
two solution strategies equivalent.
Notice that one-level Domain Decomposition (DD) methods can be generally obtained by a splitting $A = M - N$, hence, they can be used as stationary iterations or preconditioners; see, e.g., \cite{Lions1,Lions2,DoleanBook,quarteroni1999domain,ToselliWidlund,DDLOGO,Gander:2008}.
Unfortunately, DD methods are in general not scalable and a coarse correction step is often desirable.
See, e.g., \cite{Chaouqui2018,CiaramellaGander,CiaramellaGander2,CiaramellaGander3,CHS1,CHS2} for exceptions and detailed scalability and non-scalability analyses.

A two-level method is characterized by the combination of a one-level method, defined on $V$,
and a coarse correction step, performed on a coarse space $V_c$.
The coarse space $V_c$ is finite dimensional and it must satisfy the
condition $\dim V_c~\ll~\dim V$. 
The mappings between $V$ and $V_c$ are realized by a restriction operator $R : V \rightarrow V_c$
and a prolongation operator $P : V_c \rightarrow V$.
In general, the restriction of $A : V \rightarrow V$ on $V_c$ is defined 
as $A_c=RAP$, which is assumed to be an invertible matrix.

Now, we distinguish two cases: a two-level stationary method and a two-level preconditioning method.
In the first case, a stationary method is used as first-level method.
After each stationary iteration, which produces an approximation $u_{app}$,
the residual $r = f - Au_{app}$ is mapped from $V$ to $V_c$, 
the coarse problem $A_c e = R r$ is solved to get $e \in V_c$,
and the coarse correction step is defined as $u_{new} = u_{app} + P e$.
This correction provides the new approximation $u_{new}$. 
By repeating these operations iteratively, one gets a two-level stationary method.
The preconditioner corresponding to this method is denoted by $M_{s,2L}$.
Notice that this idea is very closely related to two-grid methods. 
In the second case, the first-level method is purely a preconditioner $M^{-1}$.
The corresponding two-level preconditioning method, denoted by $M_{2L}$ is
obtained in an additive way: the one-level preconditioner $M^{-1}$
is added to the coarse correction matrix $PA_c^{-1} R$.
When used with appropriate implementations, the two preconditioners $M_{2L}$ and $M_{s,2L}$
require about the same computational effort per Krylov iteration. However, their different structures
can lead to different performances of Krylov methods.

The literature about two-level DD methods is very rich. See, e.g., 
\cite{Chaouqui2018Coarse,Chaouqui2019,CiaramellaRefl,Dubois2012,gander2014new,GHS2018,gander2017shem,
gander2019song,GanderVanC2019,GanderVanzan19},
for references considering DD stationary methods, and, e.g.,
\cite{Aarnes2002,Bjorstad2018,Dohrmann2008,DoleanDTN,Galvis3,Galvis,Galvis2,gander2015analysis,
Klawonn2015,Klawonn,Spillane2011,Spillane2014,Zampini2017},
for references considering DD preconditioners.
See also general classical references as \cite{DoleanBook,quarteroni1999domain,ToselliWidlund} and \cite{greenbaum1997iterative,hackbusch2013multi}.

For any given one-level DD method (stationary or preconditioning), the choices of $V_c$, $P$ and $R$ influence
very strongly the convergence behavior of the corresponding two-level method. 
For this reason, the main focus of all the references mentioned above is the definition of different coarse spaces
and new strategies to build coarse space functions, leading to efficient two-level DD stationary and preconditioning
methods. Despite the mentioned references consider several one-level DD methods
and different partial differential equation (PDE) problems, it is still possible to classify them in two main groups.
These depend on the idea governing the definition of the coarse space.
To explain it, let us consider a DD iterative method (e.g., RAS) applied to a well-posed PDE problem.
Errors and residuals of the DD iterative procedure have generally very special forms. 
The errors are harmonic, in the sense of the underlying PDE operator, in the interior of the
subdomains (excluding the interfaces).
Moreover, the errors are predominant in the overlaps.
The residuals are predominant on the interfaces and zero outside the overlap.
For examples and more details, see, e.g., \cite{gander2017shem,CiaramellaGanderMamooler,CiaramellaRefl}.
This difference motivated, sometimes implicitly, the construction of different coarse spaces.
On the one hand, many references use different techniques to define coarse functions 
in the overlap (where the error is predominant), and then extending them on
the remaining part of the neighboring subdomains; see, e.g., 
\cite{Dohrmann2008,DoleanDTN,Galvis3,Galvis,Galvis2,Klawonn,Klawonn2015,Spillane2011,Spillane2014}.
On the other hand, in other works the coarse space is created by first defining basis function on the interfaces
(where the residual is non-zero), and then extend them (in different ways) on the portions of the neighboring subdomains;
see, e.g., \cite{Aarnes2002,Bjorstad2018,Chaouqui2018Coarse,Chaouqui2019,CiaramellaRefl,gander2014new,gander2015analysis,gander2017shem,
gander2019song,GanderVanC2019,Klawonn,GanderVanzan19}.
For a good, compact and complete overview of several of the different coarse spaces,
we refer to \cite[Section 5]{Klawonn}. For other different techniques and related discussions,
see, e.g., \cite{DoleanBook,Dubois2012,gander2014new,GHS2018,Graham2007,Zampini2017}.

The scenario is actually even more complicate, because different one-level DD methods are used (e.g., overlapping
and non-overlapping methods) and different PDEs are considered. However, the classifications we used so far
are sufficiently accurate to allow us to give a precise description of the novelties of our work.
We introduce for the first time so-called two-level DD substructured methods.
These are  two-level stationary iterative methods and
the term ``substructured'' indicates that iterations and coarse spaces are defined
on the interfaces (or skeletons).\footnote{Notice that the term ``substructured'' refers very often to DD methods
that are defined on non-overlapping subdomains; see, e.g., \cite{quarteroni1999domain,ToselliWidlund}.
However, in this work it indicates methods are purely defined on the interfaces,
independently of the type of (overlapping or non-overlapping) decomposition of the domain;
see, e.g., \cite[Section 5]{Gander1}.} 

With this respect, they are defined in the same spirit as two-level methods whose
coarse spaces are extensions in volume of interfaces basis functions.
Moreover, they share some similarities with the two-level methods designed in \cite{CiaramellaRefl} 
for the solution of PDEs on perforated domains.

We call our two-level substructured DD methods Spectral 2-level Substructured (S2S) methods, for which the coarse space
is obtained as the span of certain interface functions. 
A common choice would be to use a spectral coarse space, that is the span of
the dominant eigenfunctions of the one-level iteration operator $G:=M^{-1}N$.
 However, the S2S framework allows one to choose arbitrarily the coarse space functions,
as, e.g., the ones proposed in several papers as \cite{gander2015analysis,gander2017shem,gander2019song,Klawonn}.
Following the idea of correcting the `badly converging' modes of $G$, several papers 
proposed new, and in some sense optimal, coarse spaces.
In the context of domain decomposition methods, we refer, e.g., to 
\cite{gander2014new,GHS2018,gander2019song}, where efficient coarse spaces have been designed for
parallel, restricted additive and additive Schwarz methods. 
Fundamental results are presented in \cite{xu_zikatanov_2017}: for a symmetric and positive definite $A$,
it is proved that the coarse space of size $m$ that minimizes the energy norm of 
the two-level iteration operator is the exactly the spectral coarse space made by the first $m$ dominant eigenfunctions of $G$.
The sharp result of \cite{xu_zikatanov_2017} provides a concrete (optimal) choice
of $V_c$ minimizing the energy norm of the two-level operator. 
This minimum value is generally an upper bound for the asymptotic convergence factor.

The substructured operator $A$ considered in this paper is not necessarily symmetric.
As we will see in Section \ref{subs:globalCF}, 
coarse spaces different from the spectral one can lead to better convergence.
The S2S method, discussed in Section \ref{sec:Two-Level-methods} is capable to successfully 
accommodate (and generate numerically) different coarse spaces.
Convergence results are presented in Section \ref{sec:conv_S2S}, where the relations between
the kernel of the two-level operator, its contraction factor and the spectrum of the one-level operator $G$
are extensively discussed. These results are obtained by a novel analysis based on an 
infinite-matrix representation of the two-level operator. This analysis has a rather general applicability, it can be used to tackle non-symmetric problems, and allows us to show precisely in which cases a spectral coarse space is not (asymptotically) optimal.

From a numerical point of view, the S2S framework has several advantages if compared to a classical two-level DD method defined in volume.
Since the coarse space functions are defined on the interfaces, less memory storage is required. 
For a three-dimensional problem with mesh size $h$, a discrete interface coarse function is an array
of size $O(1/h^2)$. This is much smaller than $O(1/h^3)$, which is the size of an array corresponding
to a coarse function in volume. For this reason the resulting interface restriction and prolongation operators
are much smaller matrices, and thus the corresponding interpolation operations are cheaper to be performed.
Therefore, assuming that the one-level stationary iteration step and the dimension of the coarse space are 
the same for an S2S method and a method in volume, each S2S iteration is generally computationally less expensive.
In terms of iteration number, our S2S methods perform similarly or faster than
other two-level methods that use the same DD smoother.
Notice also, that the pre-computation part, that consists mainly in constructing
the coarse space $V_c$ and assembling the operators $P$, $R$ and $A_c$ requires the same computational effort
of a method in volume. Moreover, the substructured feature of the S2S framework allows us to
introduce two new procedures, based on a principal component analysis (PCA) and neural networks, 
for the numerical calculation of an efficient coarse space $V_c$.
Direct numerical experiments will show that the coarse spaces generated by these two approach
either outperform the spectral coarse space and other commonly used coarse spaces,
or they lead to a very similar convergence behavior.

This paper is organized as follows. In Section \ref{sec:Two-Level-method-Laplace}, we formulate
the classical parallel Schwarz method in a substructured form. This is done at the continuous level
and represents the starting point for the S2S method introduced in Section \ref{sec:Two-Level-methods}.
A detailed convergence analysis is presented in Section \ref{sec:conv_S2S}. Section \ref{sec:numerical_Vc} discusses both PCA-based and deep neural networks approaches to numerically create an efficient coarse space.
Extensive numerical experiments are presented in Section \ref{sec:num_exp}, where the robustness of the proposed
methods with respect to mesh refinement and physical (jumping) parameters is studied.
We present our conclusions in Section \ref{sec:conclusions}.
Finally, in the Appendix important implementation details are discussed.

\section{Substructured Schwarz domain decomposition methods}\label{sec:Two-Level-method-Laplace}
Consider a bounded Lipschitz domain $\Omega \subset \mathbb{R}^d$ for $d\in \{2,3\}$,
a general second-order linear elliptic operator $\mathcal{L}$ and a function $f \in L^2(\Omega)$.
Our goal is to introduce new domain-decomposition based methods for the efficient numerical solution 
of the general linear elliptic problem

\begin{equation}\label{model}
\mathcal{L} u = f \text{ in $\Omega$, $u = 0$ on $\partial \Omega$},
\end{equation}
which we assume to be uniquely solved by a $u \in H^1_0(\Omega)$.

To formulate our methods we need to fix some notation. 
Given a bounded set $\Gamma$ with boundary $\partial \Gamma$, we denote by $\rho_{\Gamma}(x)$ the
function representing the distance of $x \in \Gamma$ from $\partial \Gamma$. 
We can then introduce the $H_{00}^{1/2}(\Gamma)$ the space 
\begin{equation}\label{eq:LionsMagenes}
H_{00}^{1/2}(\Gamma) := \{ v \in H^{1/2}(\Gamma) \, : \, v/\rho_{\Gamma}^{1/2} \in L^2(\Gamma) \},
\end{equation}
which is also known as the Lions-Magenes space; see, e.g., \cite{lions1972non,quarteroni1999domain,tartar2007introduction}.
Notice that $H_{00}^{1/2}(\Gamma)$ can be equivalently defined as the space of
functions in $H^{1/2}(\Gamma)$ such that their extensions	 by zero to a superset
$\widetilde{\Gamma}$ of $\Gamma$ are in $H^{1/2}(\widetilde{\Gamma})$;
see, e.g., \cite{tartar2007introduction}.

Next, consider a decomposition of $\Omega$ into $N$ overlapping Lipschitz subdomains $\Omega_j$, that is
$\Omega = \cup_{j \in \mathcal{I}} \Omega_j$ with $\mathcal{I}:=\{1,2,\dots,N\}$.
For any $j \in \mathcal{I}$, we define the set of neighboring indexes
$\mathcal{N}_j :=\{ \ell \in \mathcal{I} \, : \, \Omega_j \cap \partial \Omega_\ell \neq \emptyset \}$.
Given a $j \in \mathcal{I}$, we introduce the substructure of $\Omega_j$ defined as
$\mathcal{S}_j := \cup_{\ell \in \mathcal{N}_j} \bigl(\Omega_j \cap \partial \Omega_\ell\bigr)$,
that is the union of all the portions of $\partial \Omega_\ell$ with $\ell \in \mathcal{N}_j$.\footnote{Notice that the substructure of a subdomain is sometimes called ``skeleton'';
see, e.g., \cite{CHS2}.}
Notice that the sets $\mathcal{S}_j$ are open and their closures are
$\overline{\mathcal{S}_j} = \mathcal{S}_j \cup \partial \mathcal{S}_j$,
with $\partial \mathcal{S}_j := \cup_{\ell \in \mathcal{N}_j} \bigl(\partial \Omega_j \cap  \partial \Omega_\ell \bigr)$.
Figure \ref{fig:decomposition} provides an illustration of substructures corresponding 
to a commonly used decomposition of a rectangular domain.
\begin{figure}
\centering
\begin{tikzpicture}[scale=0.15]
\draw[color=black] (9,19.5) node {$\Omega$};
\draw [thick,black] (0,0) rectangle (18,18);
  \draw [dashed,black] (0,6) -- (18,6); 
  \draw [dashed,black] (0,12) -- (18,12); 
  \draw [dashed,black] (6,0) -- (6,18); 
  \draw [dashed,black] (12,0) -- (12,18); 
  \draw [thick,black] (0,5) -- (18,5); 
  \draw [thick,black] (0,7) -- (18,7); 
  \draw [thick,black] (0,11) -- (18,11); 
  \draw [thick,black] (0,13) -- (18,13); 
  \draw[color=red] (9,9) node {$\Omega_j$};
  \draw [thick,black] (5,0) -- (5,18); 
  \draw [thick,black] (7,0) -- (7,18); 
  \draw [thick,black] (11,0) -- (11,18); 
  \draw [thick,black] (13,0) -- (13,18); 
  \draw [thick,red] (5,5) rectangle (13,13);
\begin{scope}[shift={(30,5)}]
\draw[color=black] (4,11.5) node {$\Omega_j$};
\draw[color=blue] (4,4) node {$\mathcal S_j$};
\draw [thick,red](-2,-2) rectangle (10,10);
  \draw [thick,blue] (-2,7) -- (10,7); 
  \draw [thick,blue] (1,-2) -- (1,10);
  \draw [thick,blue] (-2,1) -- (10,1);
  \draw [thick,blue] (7,-2) -- (7,10);    
\end{scope}
\end{tikzpicture}\caption{Decomposition of a rectangular $\Omega$ into nine overlapping subdomains (left), and representation of the substructure $\mathcal{S}_j$ for the central subdomain (right).}\label{fig:decomposition}
\end{figure}
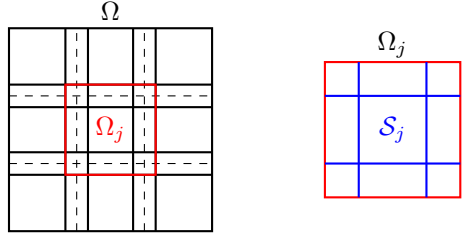
The substructure of $\Omega$ is defined as $\mathcal{S}:=\cup_{j \in \mathcal{I}}\overline{\mathcal{S}_j}$.
We denote by
$\mathcal{E}_j^0 : L^2(\mathcal{S}_j) \rightarrow L^2(\mathcal{S})$
the extension by zero operator.
Now, we consider a set of continuous functions $\chi_j : \overline{\mathcal{S}_j} \rightarrow [0,1]$,
$j=1,\dots,N$, such that
\begin{equation*}
\chi_j(x) \in \begin{cases}
(0,1] &\text{for $x \in \mathcal{S}_j$}, \\
\{1\} &\text{for $x \in \overline{\mathcal{S}_j \setminus \cup_{\ell \in \mathcal{N}_j} \mathcal{S}_\ell}$},\\
\{0\} &\text{for $x \in \partial \mathcal{S}_j \setminus \partial \Omega$},
\end{cases}
\end{equation*}
and $\sum_{j\in \mathcal{I}} \mathcal{E}_j^0 \chi_j \equiv1$, which means that the functions $\chi_j$
form a partition of unity.
Further, we assume that the functions $\chi_j$, $j \in \mathcal{I}$, satisfy the condition
$\chi_j / \rho_{\mathcal{S}_j}^{1/2}~\in~L^{\infty}(\mathcal{S}_j)$.

For any $j \in \mathcal{I}$, we define $\Gamma_j^{\Int} := \partial \Omega_j \cap \bigl( \cup_{ \ell \in \mathcal{N}_j} \Omega_\ell \bigr)$ and introduce the following trace and restriction operators 
$$\tau_j : H^1(\Omega_j) \rightarrow H^{1/2}(\mathcal{S}_j)
\text{ and } \tau_j^{\Int} : H^{1/2}(\mathcal{S}) \rightarrow H^{1/2}(\Gamma_j^{\Int}).
$$

It is well known that \eqref{model} is equivalent to the domain decomposition system (see, e.g., \cite{quarteroni1999domain})
\begin{equation}\label{modelsystem}
\begin{split}
\mathcal{L}u_j &= f_j \text{ in $\Omega_j$}, \:
u_j = \sum_{\ell \in \mathcal{N}_j} \mathcal{E}_\ell^0 (\chi_\ell \tau_\ell u_\ell)
\text{ on $\Gamma_j^{\Int}$}, \:
u_j = 0 \text{ on $\partial \Omega_j \setminus \Gamma_j^{\Int}$},
\end{split}
\end{equation}
where $f_j \in L^2(\Omega_j)$ is the restriction of $f$ on $\Omega_j$.
Notice that, since $\tau_\ell u_\ell \in H^{1/2}(\mathcal{S}_\ell)$, the properties of the
partition of unity functions $\chi_\ell$ guarantee that
$\chi_\ell \tau_\ell u_\ell$ lies in $H_{00}^{1/2}(\mathcal{S}_\ell)$
and $\mathcal{E}_\ell^0(\chi_\ell \tau_\ell u_\ell) \in H_{00}^{1/2}(\mathcal{S})$.
Moreover, for $\ell \in \mathcal{N}_j$ it holds that
$\tau_j^{\Int} \mathcal{E}_\ell^0(\chi_\ell \tau_\ell u_\ell) \in H_{00}^{1/2}(\Gamma_j^{\Int})$
if $\Gamma_j^{\Int} \subsetneq \partial \Omega_j$, and
$\tau_j^{\Int} \mathcal{E}_\ell^0(\chi_\ell \tau_\ell u_\ell) \in H^{1/2}(\Gamma_j^{\Int})$
if $\Gamma_j^{\Int} = \partial \Omega_j$.

Given a $j \in \mathcal{I}$ such that $\partial \Omega_j \setminus \Gamma_j^{\Int} \neq\emptyset$, we define the extension operator \linebreak $\mathcal{E}_j : H_{00}^{1/2}(\Gamma_j^{\Int}) \times L^2(\Omega_j) \rightarrow H^1(\Omega_j)$ as $w=\mathcal{E}_j(v,f_j)$, where $w$ solves the problem
\begin{equation}
\mathcal{L}w=f_j \text{ in $\Omega_j$}, \:
w=v \text{ on $\Gamma_j^{\Int}$}, \:
w=0 \text{ on $\partial\Omega_j\setminus\Gamma_j^{\Int}$}
\end{equation}
for $v \in  H_{00}^{1/2}(\Gamma_j^{\Int})$. Otherwise, if $\Gamma_j^{\Int} \equiv \partial \Omega_j$, we define $\mathcal{E}_j : H^{1/2}(\Gamma_j^{\Int}) \times L^2(\Omega_j) \rightarrow H^1(\Omega_j)$ as $w=\mathcal{E}_j(v,f_j)$, where $w$ solves the problem
\begin{equation}
\mathcal{L}w=f_j \text{ in $\Omega_j$}, \:
w=v \text{ on $\Gamma_j^{\Int}$}, \:
\end{equation}
for $v \in  H^{1/2}(\Gamma_j^{\Int})$.
The domain decomposition system \eqref{modelsystem} can be then written as
\begin{equation}\label{eq:u_j}
u_j = \mathcal{E}_j(0,f_j) + \mathcal{E}_j\Bigl(\tau_j^{\Int}\sum_{\ell \in \mathcal{N}_j} \mathcal{E}_\ell^0 (\chi_\ell \tau_\ell u_\ell),0\Bigr), \: j\in\mathcal{I}.
\end{equation}
If we define $v_j := \chi_j \tau_j u_j$, $j\in \mathcal{I}$, then system \eqref{eq:u_j} becomes
\begin{equation}\label{modelsubstructed}
v_j = g_j +\sum_{\ell \in \mathcal{N}_j}G_{j,\ell}(v_\ell), \: j\in\mathcal{I},
\end{equation}
where $g_j := \chi_j \tau_j\mathcal{E}(0,f_j)$ and the operators $G_{j,\ell} : H_{00}^{1/2}(\mathcal{S}_\ell) \rightarrow H_{00}^{1/2}(\mathcal{S}_j)$ are defined as
\begin{equation}\label{eq:Gj}
G_{j,\ell}(\cdot) := \chi_j \tau_j\mathcal{E}_j \bigl( \tau_j^{\Int} \mathcal{E}_\ell^0 (\cdot), 0 \bigr).
\end{equation}
System \eqref{modelsubstructed} is the substructured form of \eqref{modelsystem}.
The equivalence between \eqref{modelsystem} and \eqref{modelsubstructed} is explained by the following theorem.

\begin{theorem}[Equivalence between \eqref{modelsystem} and \eqref{modelsubstructed}]\label{thm:equivalence}
Let $u_j \in H^1(\Omega_j)$, $j\in \mathcal{I}$, solve \eqref{modelsystem},
then $v_{j} := \chi_j\tau_j(u_j)$, $j\in \mathcal{I}$, solve \eqref{modelsubstructed}.
Let $v_j \in H^{1/2}(\mathcal{S}_j)$, $j\in \mathcal{I}$, 
solve \eqref{modelsubstructed}, then $u_j := \mathcal{E}_j(\tau_j^{\Int}\sum_{\ell \in \mathcal{N}_j} \mathcal{E}_\ell^0 (v_\ell),f_j)$, $j\in \mathcal{I}$, solve \eqref{modelsystem}.
\end{theorem}
\begin{proof}
The first statement is proved before Theorem \ref{thm:equivalence}, where the substructured system \eqref{modelsubstructed} is derived.
To obtain the second statement, we use \eqref{modelsubstructed} and the definition of $u_j$ to write
$v_j = \chi_j \tau_j \mathcal{E}_j(\tau_j^{\Int} \sum_{\ell \in \mathcal{N}_j } \mathcal{E}_\ell^0 (v_\ell),f_j) = \chi_j \tau_j u_j$.
The claim follows by using this equality together with the definitions of $u_j$ and $\mathcal{E}_j$.
\end{proof}

Take any function $w \in H^1_0(\Omega)$ and consider the
initialization $u_j^0:=w|_{\Omega_j}$, $j \in \mathcal{I}$.
The parallel Schwarz method (PSM) is given by
\begin{equation}\label{PSMvolume}
\begin{split}
\mathcal{L}u_j^n &= f_j \text{ in $\Omega_j$}, \:
u_j^n = \sum_{\ell \in \mathcal{N}_j} \mathcal{E}_\ell^0 (\chi_\ell \tau_\ell u_\ell^{n-1})
\text{ on $\Gamma_j^{\Int}$}, \:
u_j^n = 0 \text{ on $\partial \Omega_j \setminus \Gamma_j^{\Int}$},
\end{split}
\end{equation}
for $n \in \mathbb{N}^+$, and has the substructured form
\begin{equation}\label{PSMsub}
v_j^n = g_j +\sum_{\ell \in \mathcal{N}_j}G_{j,\ell}(v_\ell^{n-1}), \: j \in \mathcal{I},
\end{equation}
initialized by $v_{j}^0 := \chi_j \tau_j(u_j^0) \in H_{00}^{1/2}(\mathcal{S}_j)$.
Notice that the iteration \eqref{PSMsub} is well posed in the sense that $v_{j}^n \in H_{00}^{1/2}(\mathcal{S}_j)$ for $j \in \mathcal{I}$ and $n \in \mathbb{N}$.
Equations \eqref{PSMsub} and \eqref{modelsubstructed} allow us to obtain the substructured PSM 
in error form, that is
\begin{equation}\label{PSMsuberr}
e_j^n = \sum_{\ell \in \mathcal{N}_j}G_{j,\ell}(e_\ell^{n-1}), \: j \in \mathcal{I},
\end{equation}
for $n \in \mathbb{N}^+$, where $e_{j}^n:=v_j-v_j^n$, for $j \in \mathcal{I}$ and $n\in \mathbb{N}$.
Equation \eqref{modelsubstructed} can be written in the matrix form $A{\bf v}={\bf b}$,
where ${\bf v}=[v_1,\dots,v_N]^\top$, ${\bf b}=[g_1,\dots,g_N]^\top$ and the entries of $A$ are
\begin{equation}\label{matrixexpression}
[A]_{j,j}=I_{d,j} \text{ and } [A]_{j,\ell}=-G_{j,\ell}, \: j,k\in \mathcal{I}, \: j \neq k,
\end{equation}
where $I_{d,j}$ are the identities on $L^2(\mathcal{S}_j)$, $j \in \mathcal{I}$.
Similarly, we define the operator $G$ as
\begin{equation*}
[G]_{j,j}=0 \text{ and } [G]_{j,\ell}=G_{j,\ell}, \: j,k\in \mathcal{I}, \: j \neq k,
\end{equation*}
and hence write \eqref{PSMsub} and \eqref{PSMsuberr} as ${\bf v}^n=G{\bf v}^{n-1}+{\bf b}$ and
${\bf e}^n=G{\bf e}^{n-1}$, respectively,
where ${\bf v}^n :=[v_1^n,\dots,v_N^n]^\top$ and ${\bf e}^n :=[e_1^n,\dots,e_N^n]^\top$.
Notice that $G=I-A$, where $I:=\text{diag}_{j=1,\dots,N}(I_{d,j})$.
Moreover, if we define 
$$\mathcal{H} := H_{00}^{1/2}(\mathcal{S}_1) \times \cdots \times H_{00}^{1/2}(\mathcal{S}_N),$$
then one can clearly see that $A : \mathcal{H} \rightarrow \mathcal{H}$
and $G : \mathcal{H} \rightarrow \mathcal{H}$.

It is a standard result that the PSM iteration ${\bf v}^n=G{\bf v}^{n-1}+{\bf b}$ converges;
see, e.g., \cite{CHS2} for a convergence result of the PSM in a substructured form,
\cite{Chaouqui2018Coarse,CiaramellaGander,CiaramellaGander2,CiaramellaGander3,DDLOGO,CiaramellaHoefer,CHS1,Gander1}
for other convergence results and \cite{DoleanBook,ToselliWidlund} for standard references. 
The corresponding limit is the solution to the problem $A{\bf v}={\bf b}$. 
From a numerical point of view, this is not necessarily true
if the (discretized) subproblems \eqref{PSMvolume} are not solved exactly.
For this reason, we assume in what follows that the subproblems \eqref{PSMvolume} are always solved exactly.

\section{S2S: Spectral two-level substructured DD method}\label{sec:Two-Level-methods}

The idea of the S2S method is to use a coarse space $V_c$ defined
as the span of certain linearly independent functions defined on the skeletons of
the subdomains $\Omega_j$, for $j \in \mathcal{I}$.
Consider the space $\mathcal{H}$, endowed with an inner product $\langle \cdot , \cdot \rangle$,
and a set of $m>0$ linearly independent functions $\psib_k$, $k=1,\dots,m$.
Notice that each $\psib_k$ has the form $\psib_k = [ \, \psi_k^1 , \dots , \psi_k^N \, ]^\top$,
where $\psi_k^j \in H_{00}^{1/2}(\mathcal{S}_j)$ for $j \in \mathcal{I}$.
We define the coarse space $V_c$ as
$$V_c := \spa \{ \psib_1 , \dots , \psib_m \}.$$
To define a two-level method, we need restriction and prolongation operators.
Once the coarse space $V_c$ is constructed, the choice of these operators follows naturally.
We define the prolongation operator $P : \mathbb{R}^{m} \rightarrow \mathcal{H}$ and 
the restriction operator $R : \mathcal{H} \rightarrow \mathbb{R}^{m}$ as
\begin{equation}\label{prol-restr_intro}
P {\bf v}:=
\sum\limits_{k=1}^{m} ({\bf v})_k\psib_k,\quad
\text{ and }\quad
R {\bf f} :=\begin{bmatrix}
\langle \psib_1, {\bf f}\rangle, &
\cdots, &
\langle \psib_m, {\bf f}\rangle
\end{bmatrix}^\top,
\end{equation} 
for any ${\bf v} \in \mathbb{R}^m$ and ${\bf f}  \in \mathcal{H}$.
Notice that, if the functions $\psib_k$ are orthogonal, $P$ is the adjoint operator of
$R$ and we have that $RP=I_m$, where $I_m$ is the identity
matrix in $\mathbb{R}^{m \times m}$.
The restriction of the operator $A$ on $V_c$ is the matrix $A_c \in \mathbb{R}^{m \times m}$
obtained in a Galerkin manner, $A_c = RAP$.

With the operators $P$, $R$ and $A_c$ in hands, our two-level method is defined as a classical
two-level strategy applied to the substructured problem \eqref{modelsubstructed}
and using the domain decomposition iteration \eqref{PSMsub} as a smoother.
This results in Algorithm \ref{two-level},
\begin{algorithm}[t]
\setlength{\columnwidth}{\linewidth}
\caption{Two-level substructured domain decomposition method}
\begin{algorithmic}[1]\label{two-level}
\REQUIRE ${\bf u}^{0}$ $\qquad \qquad \qquad \qquad \quad \:$ (initial guess)
\STATE ${\bf u}^{n}=G{\bf u}^{n-1} +{\bf b}$, $n=1,\dots,n_1$ (DD pre-smoothing steps)
\STATE ${\bf r} = {\bf b}-A{\bf u}^{n_1}$ $\qquad \qquad \qquad \quad \; \,$ (compute the residual)
\STATE Solve $A_c {\bf u}_c=R{\bf r}$ $\qquad \qquad \quad \; \; \; \,$ (solve the coarse problem)
\STATE ${\bf u}^{0} = {\bf u}^{n_1}+P{\bf u}_c$ $\qquad \qquad \qquad \, \, $ (coarse correction)
\STATE ${\bf u}^{n}=G{\bf u}^{n-1} +{\bf b}$, $n=1,\dots,n_2$ (DD post-smoothing steps)
\STATE Set ${\bf u}^{0}={\bf u}^{n_2}$ $\qquad \qquad \qquad \quad \; \; \,$ (update)
\STATE Repeat from 1 to 6 until convergence
\end{algorithmic}
\end{algorithm}
where $n_1$ and $n_2$ are the numbers of the pre- and post-smoothing steps.

The well posedness of Algorithm~\ref{two-level} is proved in the next lemma.
\begin{lemma}[Well posedness of S2S]\label{lemma:S2S_well}
Consider the inner product space $(\mathcal{H},\langle \cdot , \cdot \rangle)$,  
a set of linearly independent functions $\{ \psib_k \}_{k=1,\dots,m}$, for 
some $m>0$, and let 
$V_c := \spa\{ \psib_1 , \dots , \psib_m \}$ be a finite-dimensional subspace of $\mathcal{H}$.
Let $P$ and $R$ be defined as in \eqref{prol-restr_intro} (with $\langle \cdot , \cdot \rangle$).
If $A_c = RAP$ is invertible and
the initialization vector ${\bf u}^0$ is chosen in $\mathcal{H}$,
then ${\bf u}^{n_2}$ (computed at Step 5 of Algorithm \ref{two-level}) is in $\mathcal{H}$.
\end{lemma}
\begin{proof}
It is sufficient to show that for a given 
${\bf u}^0 \in \mathcal{H}$ all the steps of Algorithm~\ref{two-level}
are well posed. Since ${\bf b} \in \mathcal{H}$, $G : \mathcal{H} \rightarrow \mathcal{H}$
and $A : \mathcal{H} \rightarrow \mathcal{H}$, Step 1 and Step 2 produce ${\bf u}^{n_1}$ and
${\bf r}$ in $\mathcal{H}$.
Step 3 is well posed because $A_c$ is assumed to be invertible.
Since $V_c$ is a subset of $\mathcal{H}$,
$P {\bf u}_c$ and ${\bf u}^0$ in Step 4 lie in $\mathcal{H}$.
Clearly, the element ${\bf u}^{n_2}$ produced by Step 5 is also in $\mathcal{H}$.
Therefore, by induction we obtain that Algorithm \ref{two-level} is well posed in $\mathcal{H}$.
\end{proof}

The key hypothesis of Lemma \ref{lemma:S2S_well} is the invertibility of the coarse matrix
$A_c$. In Section \ref{subs:invAc}, an equivalent characterization of this property is proved.
This result (and the discussion thereafter) allows us to obtain the invertibility of $A_c$
if, e.g., $V_c$ is a spectral coarse space. Moreover, it is worth to remark that,
as discussed in \cite{xu_zikatanov_2017}, the pseudo-inverse of $A_c$ can be used 
in case $A_c$ is not invertible.

Let us now turn our attention to the coarse space $V_c$.
We distinguish two general classes of coarse space functions: global and local coarse functions.
Global coarse functions refer to functions defined directly on the global skeleton of $\Omega$.
An ideal choice of global coarse functions would be to define $V_c$ as the span of 
of the dominating eigenfunctions of the one-level operator $G$. 
In the context of multigrid methods, this choice is extensively discussed in \cite{xu_zikatanov_2017},
where the authors prove that, if $A$ and the preconditioner corresponding to the one-level
iteration are symmetric, the spectral coarse space minimizes the energy norm of $T$.
This sharp result provides a concrete optimal choice
of $V_c$ minimizing the energy norm of $T$. This is generally an upper bound for
the asymptotic convergence factor $\rho(T)$, as we will see in Section \ref{subs:globalCF}.
Moreover, we will show in Section \ref{sec:numerical_Vc} two numerical approaches, based on a PCA approach and neural networks, for the construction of global coarse space functions.
These are generally different from the spectral ones and lead to a better convergence.

Another possibility is to build local coarse functions using eigenfunctions of the local operators $G_j$.
However, the eigenfunctions of $G_j$ (or $G$) are known only in very special cases and their numerical computation
could be quite expensive. To overcome this problem one could define $V_c$ as the span of some Fourier basis
functions, that could be obtained by solving a Laplace-Beltrami eigenvalue problem on each interface (or skeleton);
see, e.g., \cite{Klawonn,gander2015analysis}.
In this case, assuming that local basis functions $\psi_k^j \in H_{00}^{1/2}(\mathcal{S}_j)$
(endowed with inner product $\langle \cdot , \cdot \rangle_j$) are available,
the coarse space $V_c$ can be constructed as
\[V_c := \spa_{j \in \mathcal{I} , k=1,\dots,\widehat{m}} \left\{ {\bf e}_j \otimes \psi_k^j \right\},\]
for some positive integer $\widehat{m}$, where $\otimes$ denotes the standard Kronecker product
and ${\bf e}_j$, for $j \in \mathcal{I}$, are the canonical vectors in $\mathbb{R}^N$.
In this case prolongation and restriction operators defined in \eqref{prol-restr_intro} are
\begin{equation}\label{prolungation-restriction_intro}
\begin{split}
P\begin{bmatrix}
{\bf v}^1\\
\vdots \\
{\bf v}^N\\
\end{bmatrix}
&=\begin{bmatrix}
\sum\limits_{k=1}^{\widehat{m}} ({\bf v}^1)_k\psi_k^1, 
& \cdots &
\sum\limits_{k=1}^{\widehat{m}} ({\bf v}^N)_k\psi_k^N
\end{bmatrix}^\top, \\
\quad
R\begin{bmatrix}
f_1\\
\vdots \\
f_N \\
\end{bmatrix}
&=\begin{bmatrix}
\langle \psi_1^1,f_1\rangle_1, &
\cdots, &
\langle \psi_{\widehat{m}}^1,f_1\rangle_1, &
\cdots &
\langle \psi_1^N,f_N\rangle_N, &
\cdots, &
\langle \psi_{\widehat{m}}^N,f_N\rangle_N
\end{bmatrix}^\top,
\end{split}
\end{equation} 
for any ${\bf v}^1,\dots,{\bf v}^N \in \mathbb{R}^{\widehat{m}}$ and any $(f_1,\dots,f_N) \in \mathcal{H}$.
We wish to remark, that the choice of the inner product $\langle \cdot , \cdot \rangle$
(or $\langle \cdot , \cdot \rangle_j$ for $j\in \mathcal{I}$) in the definition of $P$ and $R$ is arbitrary.
One possible choice is the classical $H^{1/2}$ inner product. However, this could be
too expensive from a numerical point of view. Another possibility would be to consider
the classical $L^2$ inner product, which is the choice we make in our implementations.

A detailed convergence analysis that covers our S2S method, is presented in Section \ref{sec:conv_S2S}.
This is based on the general structure of a two-level iteration operator.
A direct calculation reveals that one iteration of the S2S method can be written as 
\begin{equation}\label{eq:stationary}
{\bf u}^{\rm new}=G^{n_2}(I-PA_c^{-1}RA)G^{n_1}{\bf u}^{\rm old} + \widetilde{M} {\bf b},
\end{equation}
where $I$ is the identity operator over $\mathcal{H}$;
see, also, \cite{CiaramellaRefl,gander2014new,hackbusch2013multi}.
Here, $\widetilde{M}$ is an operator which acts on the right-hand side vector ${\bf b}$.
Such operator can be regarded as the preconditioner corresponding to our two-level method.
In error form, the iteration \eqref{eq:stationary} becomes
\begin{equation}\label{twolevelrecurrence}
{\bf e}^{\rm new}=T{\bf e}^{\rm old} \text{ with } T:=G^{n_2}(I-PA_c^{-1}RA)G^{n_1},
\end{equation}
where ${\bf e}^{\rm new}:= {\bf u}-{\bf u}^{\rm new}$ and ${\bf e}^{\rm old}:= {\bf u}-{\bf u}^{\rm old}$.
Hence, to prove convergence of the S2S method we study the operator $T$.
For simplicity most of the results will be proved for $n_1=1$ and $n_2=0$.

\section{Convergence analysis}\label{sec:conv_S2S}
In this section, we provide convergence results for two-level iterative methods in
a general framework that covers the setting of the S2S domain decomposition method
presented in Section \ref{sec:Two-Level-methods}.

Let $(\mbX, \langle \cdot , \cdot \rangle)$ be a complex\footnote{The hypothesis of a complex inner-product space is general and has the goal of dealing with possibly complex eigenvectors of non-symmetric 
$\mbA$ and $\mbG$. Nevertheless, the analysis presented in this section is valid also in the (more commonly used) real case.} inner-product space and
$\mbA x = b$ a linear problem, where the operator $\mbA : \mbX \rightarrow \mbX$ 
is bijective and $b \in \mbX$ is a given vector. 
Consider a set of $m>0$ linearly independent functions $\{\psib_k\}_{k=1,\dots,m}$,
and denote by $V_c$ the finite-dimensional subspace of $\mathcal{X}$ defined as the span of the functions
$\{\psib_k\}_{k=1,\dots,m}$. We denote by $P : \C^m \rightarrow \mbX$ and $R : \mbX \rightarrow \C^m$ the
prolongation and restriction operators defined as in \eqref{prol-restr_intro},
and define the matrix $\mbA_c := R \mbA P \in \C^{m \times m}$.
Given a smoothing operator $\mbG : \mbX \rightarrow \mbX$, a two-level iterative method
(as the one defined in Algorithm \ref{two-level}) is characterized by the iteration operator
$\mbT : \mbX \rightarrow \mbX$ defined by
\begin{equation}\label{eq:itT}
\mbT := \mbG^{n_2} ( \mbI - P \mbA_c^{-1} R \mbA ) \mbG^{n_1},
\end{equation}
where $\mbI : \mbX \rightarrow \mbX$ is the identity operator.
In what follows the properties of $\mbT$ are analyzed. In particular, 
the invertibility of $\mbA_c$ is characterized in Section \ref{subs:invAc},
the convergence (spectral) properties of $\mbT$ are discussed in the case
of global coarse functions in Section \ref{subs:globalCF} and in the case of
local coarse functions in Section \ref{subs:localCF}.

\subsection{Invertibility of the coarse matrix}\label{subs:invAc}

The well-posedness of a two-level method (like the S2S) is essentially related to the invertibility of the coarse operator $\mbA_c$. Even though one could replace the inverse
of $\mbA_c$ with its pseudo-inverse, as discussed in, e.g., \cite{xu_zikatanov_2017}, 
in our analysis we will assume that $\mbA_c$ is invertible. 
The next Lemma provides an equivalent characterization for the invertibility of $\mbA_c$.

\begin{lemma}[Invertibility of a coarse operator $\mbA_c$]\label{lemma:invAc}
Let $\mathbb{P}_{V_c} : \mbX \rightarrow V_c$ be the projection operator onto $V_c$. 
The coarse matrix $\mbA_c = R \mbA P$ has full rank if and only if 
$\mathbb{P}_{V_c}( \mbA {\bf v} ) \neq 0 \: \forall {\bf v} \in V_c\setminus \{0\}$.
\end{lemma}
\begin{proof}
We first show that if $\mathbb{P}_{V_c}( \mbA{\bf v} ) \neq 0$ for any ${\bf v} \in V_c\setminus \{0\}$,
then $\mbA_c = R\mbA P$ has full rank.
This result follows from the rank-nullity theorem, if we show that the only element in the kernel
of $\mbA_c$ is the zero vector. To do so, we recall the definitions of $P$ and $R$ given in
\eqref{prol-restr_intro}. Let us now consider a vector ${\bf z}\in \C^m$ .
Clearly, $P {\bf z}=0$ if and only if ${\bf z}=0$.
Moreover, for any ${\bf z} \in \C^m$ the function $P {\bf z}$ is in $V_c$.
Since $\mbA$ is invertible, then $\mbA P{\bf z}=0$ if and only if ${\bf z}=0$.
Moreover, by our assumption it holds that $\mathbb{P}_{V_c}(AP{\bf z}) \neq 0$.
Now, we notice that $R{\bf w} \neq 0$ for all ${\bf w} \in V_c \setminus\{0\}$,
and $R{\bf w} = 0$ for all ${\bf w} \in V_c^\perp$, where $V_c^\perp$ denotes 
the orthogonal complement of $V_c$ in $\mathcal{X}$ with respect to $\langle \cdot , \cdot \rangle$. 
Since $(\mathcal{X},\langle \cdot , \cdot \rangle)$ is an inner-product space,
we have $\mbA P{\bf z} = \mathbb{P}_{V_c}(\mbA P{\bf z}) + (\mbI-\mathbb{P}_{V_c})(\mbA P{\bf z})$
with $(\mbI-\mathbb{P}_{V_c})(\mbA P{\bf z}) \in V_c^\perp$.
Hence, $R\mbA P{\bf z} = R\mathbb{P}_{V_c}(\mbA P{\bf z}) \neq 0$
for any non-zero ${\bf z}$.

Now we show that, if $\mbA_c = R\mbA P$ has full rank, then $\mathbb{P}_{V_c}( \mbA {\bf v}  ) \neq 0$ for any
${\bf v} \in V_c\setminus \{0\}$. We proceed by contraposition and prove that if there exists
a ${\bf v} \in V_c\setminus \{0\}$ such that $\mbA {\bf v} \in V_c^\perp$, 
then $\mbA_c = R\mbA P$ has not full rank.
Assume that there is a ${\bf v} \in V_c\setminus \{0\}$ such that $\mbA {\bf v} \in V_c^\perp$.
Since ${\bf v}$ is in $V_c$, there exists a nonzero vector ${\bf z}$ such
that ${\bf v}=P{\bf z}$. Hence $\mbA P{\bf z} \in V_c^\perp$.
We can now write that $\mbA_c {\bf z}= R(\mbA P{\bf z})=0$,
which implies that $\mbA_c$ has not full rank.
\end{proof}

The following example shows that the invertibility of $A$ does not necessarily implies
the invertibility of $\mbA_c$.

\begin{example}

Consider the invertible matrix $\mbA:=\begin{footnotesize}
\begin{bmatrix} 0 & 1 \\ 1 & 0 \end{bmatrix} 
\end{footnotesize}$.
Let us denote by ${\bf e}_1$ and ${\bf e}_2$ the canonical vectors in $\mathbb{R}^2$,
define $V_c := \spa \{ {\bf e}_1 \}$, and consider the classical scalar product for $\mathbb{R}^2$.
This gives $V_c^\perp := \spa \{ {\bf e}_2 \}$.
The prolongation and restriction operators are $P={\bf e}_1$ and $R=P^\top$.
Clearly, we have that $\mbA {\bf e}_1 = {\bf e}_2$, which implies that
$\mathbb{P}_{V_c}( \mbA {\bf v} ) = 0$ for all ${\bf v} \in V_c$.
Moreover, in this case we get $\mbA_c = R\mbA P = 0$, which shows that $\mbA_c$ is not invertible.

\end{example}

Notice that, if $\mbA (V_c) \subseteq V_c$, then it holds that 
$\mathbb{P}_{V_c}( \mbA {\bf v} ) \neq 0$ $\forall {\bf v} \in V_c \setminus \{ 0\}$, and $\mbA_c$ is invertible. 
The condition $\mbA (V_c) \subseteq V_c$ is satisfied for operators of the form $\mbA = \mbI - \mbG$, as for instance those defined in \eqref{matrixexpression},
if the functions $\psib_k$ are eigenfunctions of $\mbG$. However, it represents only a
sufficient condition for the invertibility of $\mbA_c$.
As the following example shows, there exist invertible operators $\mbA$ that do not satisfy this condition,
but lead to invertible $\mbA_c$.

\begin{example}
Consider the invertible matrix $\mbA:=\begin{footnotesize}
\begin{bmatrix} 1 & 0 & 0 \\ 0 & 1 & 1 \\ 0 & 1 & 0 \end{bmatrix} 
\end{footnotesize}$.
Let us denote by ${\bf e}_1$, ${\bf e}_2$ and ${\bf e}_3$ the three canonical vectors in $\mathbb{R}^3$,
define $V_c := \spa \{ {\bf e}_1 , {\bf e}_2 \}$, and consider the classical scalar product for $\mathbb{R}^3$.
This gives $V_c^\perp := \spa \{ {\bf e}_3 \}$.
The prolongation and restriction operators are $P=[{\bf e}_1 , {\bf e}_2]$ and $R=P^\top$,
and we get $\mbA_c = R\mbA P = I$, where $I$ is the $2\times2$ identity matrix.
Now, we notice that $\mbA {\bf e}_2 = {\bf e}_2 + {\bf e}_3$, which implies that
$\mathbb{P}_{V_c}( \mbA {\bf e}_2 )\neq 0$ and $\mathbb{P}_{V_c^\perp}( \mbA {\bf e}_2 )\neq 0$.
Hence $V_c$ is not invariant under $\mbA$, but $\mbA_c$ is invertible.
\end{example}

\subsection{Global coarse functions}\label{subs:globalCF}
In this section, we study general convergence properties of the operator $\mbT$.
The first theorem characterizes the relation between
the kernel of $\mbT$ and the coarse space $V_c$.

\begin{theorem}[Kernel of $\mbT$, coarse space $V_c$]\label{thm:eqiv1}
Let $P$ and $R$ be defined as in \eqref{prol-restr_intro} by linearly independent
functions $\psib_1,\dots,\psib_m$ such that $\mbA_c = R\mbA P$ is invertible.
For any $\psib \in \mbX$ it holds that
\begin{equation}\label{eq:equiv:1}
[ \mbI - P \mbA_c^{-1} R \mbA ] \psib = 0 
\; \Leftrightarrow \; 
\psib \in V_c  := \spa\{ \psib_1,\dots,\psib_m \} .
\end{equation}
\end{theorem}

\begin{proof}
%
Assume that $\psib \in V_c$. 
This implies that there exists a vector ${\bf z}$ such that $\psib = P {\bf z}$.
Hence, we can compute
\begin{equation*}
[ \mbI - P \mbA_c^{-1} R \mbA ] \psib = \psib - P \mbA_c^{-1} R \mbA \psib
= P {\bf z} - P \mbA_c^{-1} R \mbA P{\bf z}=P{\bf z}-P{\bf z}=0. \\
\end{equation*}


Let us now prove the reverse, that is $[ \mbI - P \mbA_c^{-1} R \mbA ] \psib = 0 \Rightarrow \psib \in V_c$.
We proceed by contraposition and assume that $\psib \notin V_c$,
that is there exists a nonzero $\psib_b \in V_c^\perp$
such that $\psib = \psib_a + \psib_b$ with $\psib_a \in V_c$.
Since $\psib_a \in V_c$, we already know that $[ \mbI - P \mbA_c^{-1} R \mbA ] \psib_a = 0$. Hence, it holds that
\begin{equation}\label{eq:proofB}
[ \mbI - P \mbA_c^{-1} R \mbA ] \psib = [ \mbI - P \mbA_c^{-1} R \mbA ] (\psib_a + \psib_b) = \underbrace{\psib_b}_{ \in V_c^\perp }-\underbrace{P \mbA_c^{-1} R \mbA \psib_b}_{ \in V_c }\neq 0.
\end{equation}
\end{proof}

To continue our analysis we construct a matrix representation of the operator $\mbT$.
For this purpose, we consider the following assumptions:
\begin{itemize}[leftmargin=10mm]\itemsep0em
\item[(H1)] $V_c$ is the span of $m$ linearly independent functions $\{ {\bf p }_k\}_{k=1}^m \subset \mbX$,
which are used to define the operators $P$ and $R$ as in \eqref{prol-restr_intro}.
\item[(H2)] The operators $\mbA$ and $\mbG$ have the same linearly independent eigenvectors $\{ \psib_k \}_{k=1}^\infty$,
The corresponding eigenvalues of $\mbA$ and $\mbG$ are denoted by $\wlambda_k$ and $\lambda_k$, respectively.
\item[(H3)] The eigenvalues $\lambda_k$ satisfy $| \lambda_k |\in(0,1)$, $| \lambda_k| \leq |\lambda_{k-1}|$ for all $k$.
\item[(H4)] There exists an index $\wm \geq m$ such that $V_c$ satisfies the relations 
\begin{equation*}
V_c \subseteq {\rm span}\,\{ \psib_k \}_{k=1}^{\wm} \quad \text{and} \quad
V_c \cap {\rm span}\,\{ \psib_k \}_{k=\wm+1}^\infty=\{0\}.
\end{equation*}
\end{itemize}
\begin{remark}
Notice that the hypothesis (H2) is valid in the context of our S2S method,
where the operators $\mbA$ and $\mbG$ satisfy the relation $\mbA = \mbI - \mbG$.
Hence, they have the same eigenvectors. Moreover, the hypothesis (H3) is satisfied
if $\mbG$ corresponds to a classical parallel Schwarz method, as in the case of our S2S method. The (discrete) classical damped Jacobi method is another important instance that satisfies (H2) and (H3).
\end{remark}
Let us now construct a matrix representation of the operator $\mbT$.
Since $V_c \subseteq {\rm span}\,\{ \psib_k \}_{k=1}^{\wm}$, the structure of $\mbT$
allows us to obtain that the set ${\rm span}\,\{ \psib_k \}_{k=1}^{\wm}$
is invariant, that is $\mbT \psib_j \in {\rm span}\,\{ \psib_k \}_{k=1}^{\wm}$
for any $j=1,\dots,\wm$.
Similarly, a direct calculation reveals that $\mbT \psib_j = \lambda_j \psib_j - \sum_{\ell=1}^{\wm} x_{j-\wm,\ell} \psib_\ell$ for $j \geq \wm+1$ for some coefficients $x_j$.
Therefore, for any $\psib_j$ there exist at most $\wm+1$ nonzero coefficients
$\wt_{j,\ell}$ such that
$\mbT \psib_j = \wt_{j,j} \psib_j + \sum_{\ell=1,\ell\neq j}^{\wm} \wt_{j,\ell}\psib_\ell$. 
If we order the coefficients $\wt_{j,\ell}$ into an infinite matrix denoted by $\wT$,
we obtain that 
\begin{equation}\label{Ciaramella_mini_10_eq:wTT}
\begin{aligned}[c]
\wT = \begin{bmatrix}
\wT_{\wm} & 0 \\
X & \Lambda_{\wm}\\
\end{bmatrix},
\end{aligned}
\qquad
\begin{aligned}[c]
&\Lambda_{\wm} = {\rm diag}\, (\lambda_{\wm+1},\lambda_{\wm+2},\dots), \\
& \wT_{\wm} \in \mathbb{C}^{\wm \times \wm}, [X]_{j,\ell} = x_{j,\ell}, \ell=1,\dots,\wm, j=1,2,\dots
\end{aligned}
\end{equation}
The infinite matrix $\wT$ can be regarded as a linear operator acting
on the space of sequences. 
The matrix representation \eqref{Ciaramella_mini_10_eq:wTT} turns to be very useful to 
analyze the convergence properties of the operator $\mbT$.
Now, we can compute by an induction argument that
\begin{equation}\label{eq:Tn}
\wT^n = 
\begin{bmatrix}
\wT_{\wm}^n & 0 \\
P_n & \Lambda_{\wm}^n\\
\end{bmatrix}
\quad \text{with} \quad
P_n = \sum_{j=1}^n \Lambda_{\wm}^{n-j} X \wT_{\wm}^{j-1}.
\end{equation}
If the matrix $\wT_{\wm}$ is nilpotent with degree $q \in \mathbb{N}_+$,
that is $\wT_{\wm}^p=0$ for all $p\geq q$, then we get for $n>q$ that
$$P_n = \sum_{j=1}^q \Lambda_{\wm}^{n-j} X \wT_{\wm}^{j-1}
+ \sum_{j=q+1}^n \Lambda_{\wm}^{n-j} X \wT_{\wm}^{j-1}
= \Lambda_{\wm}^n \sum_{j=1}^{q} \Lambda_{\wm}^{-j}X \wT_{\wm}^{j-1}.$$
Thus, by defining $X_{q} := \sum_{j=1}^{q} \Lambda_{\wm}^{-j}X \wT_{\wm}^{j-1}$,
one gets for $n>q$ that
\begin{equation}\label{eq:bobo}
\wT^n = T_a^{n-1} T_b
\quad \text{with} \quad
T_a :=
\begin{bmatrix}
0 & 0 \\
0 & \Lambda_{\wm}
\end{bmatrix},
T_b :=
\begin{bmatrix}
0 & 0 \\
\Lambda_{\wm} X_{q} & \Lambda_{\wm}\\
\end{bmatrix}.
\end{equation}

Let us begin with a case where the linear operators $\mbA$ and $\mbG$ are bounded and self-adjoint,
and the functions $\{ \psib_k \}_{k=1}^\infty$
form an orthonormal basis with respect to an inner product $\llangle \cdot , \cdot \rrangle$
(not necessarily equal to $\langle \cdot , \cdot \rangle$)
such that $(\mbX,\llangle \cdot , \cdot \rrangle)$ is a Hilbert space.
We denote by $\| \cdot \|_{\mbX}$ the norm induced by $\llangle \cdot , \cdot \rrangle$,
and by
\begin{equation}\label{eq:op_normX}
\|S\|_{\mbX}:=\sup\limits_{\|{\bf v}\|_{\mbX}=1}\|S{\bf v}\|_{\mbX}
\text{ for any $S \in \mathcal{L}(\mbX)$},
\end{equation}
the corresponding operator norm.
Notice that, since $\mbA$ and $\mbG$ are bounded, $\mbT$ is bounded as well.
Thus, we can study the asymptotic convergence factor $\rho(\mbT)$ defined as
$\lim\limits_{n\rightarrow \infty} \|\mbT^n\|_{\mbX}^{1/n}=\rho(\mbT)$;
see, e.g., \cite[Chapter 17]{lax2002functional}.
Since we assumed that $\{ \psib_k \}_{k=1}^\infty$ are orthonormal with respect
to $\llangle \cdot , \cdot \rrangle$, a direct calculation\footnote{
$ \| \mbT \|_{\mbX}^2
=\sup\limits_{\|{\bf v}\|_{\mbX}=1}\| \mbT {\bf v} \|_{\mbX}^2
=\sup\limits_{\|{\bf w}\|_{\ell^2}=1}\| \mbT \sum\limits_j {\bf w}_j \psib_j\|_{\mbX}^2
=\sup\limits_{\|{\bf w}\|_{\ell^2}=1} \sum\limits_{j,k,i,p} {\bf w}_j \overline{{\bf w}_i} \wt_{j,k}
\overline{\wt_{i,p}} \llangle \psib_k , \psib_p \rrangle
=\sup\limits_{\|{\bf w}\|_{\ell^2}=1} \sum\limits_{j,k,i} {\bf w}_j \overline{{\bf w}_i} \wt_{j,k}
\overline{\wt_{i,k}} 
= \sup\limits_{\|{\bf w}\|_{\ell^2}=1} \| \wT \|_{\ell^2}^2.
$
} 
allows one to prove that
$\|\mbT\|_{\mbX} = \| \wT \|_{\ell^2}$, where
\begin{equation}\label{eq:op_normL2}
\| S \|_{\ell^2}:=\sup\limits_{\|{\bf v}\|_{\ell^2}=1}\|S{\bf v}\|_{\ell^2}
\text{ for any $S \in \mathcal{L}(\ell^2)$}.
\end{equation}
Hence, we obtain
$\rho(\mbT) = \lim\limits_{n\rightarrow \infty} \|\mbT^n\|_{\mbX}^{1/n}
= \lim\limits_{n\rightarrow \infty} \|\wT^n\|_{\ell^2}^{1/n}$.
Notice that since $\mbT$ is a bounded operator and $\|\mbT\|_{\mbX} = \| \wT \|_{\ell^2}$, the operator $\wT$ is bounded in the $\|\cdot\|_{\ell^2}$ norm. 
Thus, the submatrices $X$, $\Lambda_{\wm}$ and $\wT_{\wm}$
are bounded in the $\|\cdot\|_{\ell^2}$ norm as well.
Therefore, $T_a$ and $T_b$ are also
bounded in the $\|\cdot\|_{\ell^2}$ norm.
Thus, equation \eqref{eq:bobo} allows us to estimate $\rho(\mbT)$:
\begin{equation*}
\begin{split}
\rho(\mbT)&= \lim\limits_{n\rightarrow \infty} \|\wT^n\|_{\ell^2}^{1/n} 
\leq \lim\limits_{n\rightarrow \infty} (\|T_a^{n-1}\|_{\ell^2} \|T_b\|_{\ell^2})^{1/n}\leq \lim\limits_{n\rightarrow \infty} |\lambda_{\wm+1}|^{(n-1)/n} \|T_b\|_{\ell^2}^{1/n} \\
&= |\lambda_{\wm+1}|.
\end{split}
\end{equation*}
Now, recalling \eqref{eq:op_normL2}, one obtains for $n>q$ that
$$
\| \wT^n \|_{\ell^2}=\sup\limits_{\|{\bf v}\|_{\ell^2}=1}\|\wT^n{\bf v}\|_{\ell^2}
\geq \|\wT^n{\bf e}_{\wm+1}\|_{\ell^2} = |\lambda_{\wm+1}|^n,
$$
where ${\bf e}_{\wm+1} \in \ell^2$ is the $\wm+1$-th canonical vector.
This estimate implies that $\rho(\mbT) = \lim\limits_{n\rightarrow \infty} \|\wT^n\|_{\ell^2}^{1/n} \geq |\lambda_{\wm+1}|$, and thus
$\rho(\mbT) = |\lambda_{\wm+1}|$.
Using Theorem 2, it is possible to see that
the matrix $\wT_{\wm}$ is nilpotent with degree $q=1$, 
if $V_c = {\rm span}\,\{ \psib_k \}_{k=1}^{\wm}$.
In this case $|\lambda_{\wm+1}|=|\lambda_{m+1}|$.
We can summarize these findings in the next theorem.

\begin{theorem}[Convergence of a two-level method]\label{thm:conv:orth}
Let the hypotheses (H1), (H2), (H3) and (H4) be satisfied, $\mbA$ and $\mbG$ be self-adjoint, and assume that the functions $\{ \psib_k \}_{k=1}^\infty$
form an orthonormal basis with respect to an inner product $\llangle \cdot , \cdot \rrangle$
such that $(\mbX,\llangle \cdot , \cdot \rrangle)$ is a Hilbert space.
If $\wT_{\wm}$ is nilpotent (e.g., if
$V_c = {\rm span}\,\{ \psib_k \}_{k=1}^{\wm}$), then
$$
\rho(\mbT) =  \lim\limits_{n\rightarrow \infty} \|\mbT^n\|_{\mbX}^{1/n} = | \lambda_{\wm+1} | < 1,
$$
where $\|\cdot\|_{\mbX}$ is the operator norm defined in \eqref{eq:op_normX}.
\end{theorem}


In the case of a spectral coarse space, the expression of $\wT$ in \eqref{Ciaramella_mini_10_eq:wTT} simplifies. The following result holds.
\begin{theorem}[The matrix $\wT$ for self-adjoint $\mbA$ and $\mbG$ and a spectral coarse space]\label{thm:NEW}
Let the hypotheses (H1), (H2), (H3) and (H4) be satisfied.
If the functions $\{ \psib_k \}_{k=1}^\infty$ form an orthonormal basis for
$(\mbX,\llangle \cdot, \cdot \rrangle)$, 
the operators $\mbA$ and $\mbG$ are self adjoint, 
and $V_c = {\rm span}\,\{ \psib_k \}_{k=1}^{m}$ with $\wm=m$, then
\begin{equation*}
\wT = \begin{bmatrix}
0 & 0 \\
0 & \Lambda_{\wm}\\
\end{bmatrix},
\end{equation*}
where $\Lambda_{\wm}$ is defined in \eqref{Ciaramella_mini_10_eq:wTT}.
\end{theorem}
\begin{proof}
Since $V_c = {\rm span}\,\{ \psib_k \}_{k=1}^{m}$, Theorem \ref{thm:eqiv1}
implies that $\wT_{\wm}=0$.
Thus, to obtain the result, it is sufficient to show that all the components of the
submatrix $X$ (see \eqref{Ciaramella_mini_10_eq:wTT}) are zero.
These components are $x_{j,\ell} = \wt_{j,\ell}$ for $j>\wm$ and $\ell\leq \wm$.
Thus, we assume that  $j>\wm$ and $\ell\leq \wm$, recall the formula
$\mbT \psib_j = \wt_{j,j} \psib_j + \sum_{k=1,k\neq j}^{\wm} \wt_{j,k}\psib_k$,
and multiply this by $\psib_\ell$ to obtain
$\llangle \psib_\ell , \mbT \psib_j \rrangle = \wt_{j,\ell}$.
Since $\mbA$ and $\mbG$ are self adjoint, one obtains by a direct
calculation that
$[ \mbI - P \mbA_c^{-1} R \mbA ]^* = [ \mbI - \mbA P \mbA_c^{-1} R ]$.
Using this property and recalling the structure of $\mbT$, we can compute
\begin{equation*}
\begin{split}
\wt_{j,\ell} &= \llangle \psib_\ell , \mbT \psib_j \rrangle
= \llangle \psib_\ell , [ \mbI - P \mbA_c^{-1} R \mbA ] \mbG \psib_j \rrangle 
= \lambda_j \llangle\,  [ \mbI - P \mbA_c^{-1} R \mbA ]^*  \psib_\ell ,  \psib_j \rrangle\\
&=\lambda_j \llangle\,  [ \mbI - \mbA P \mbA_c^{-1} R ]  \psib_\ell ,  \psib_j \rrangle.
\end{split}
\end{equation*}
Now, since $[ \mbI - \mbA P \mbA_c^{-1} R ]  \psib_\ell \in {\rm span}\,\{ \psib_k \}_{k=1}^{\wm}$
and $\ell>\wm$,
the orthogonality of the functions $\{ \psib_k \}_{k=1}^\infty$ and the hypothesis
(H4) imply that
$\llangle\,  [ \mbI - \mbA P \mbA_c^{-1} R ]  \psib_\ell ,  \psib_j \rrangle = 0$.
Hence, the result follows.
\end{proof}
Theorem \ref{thm:NEW} implies directly that
$$\| \mbT \|_\mbX = \rho(\mbT) = |\lambda_{m+1}|.$$
Let us now assume that $\mbA$ is positive definite, and thus there exists a unique positive square root operator $\mbA^{1/2}$ such that $A^{1/2}\psib_j=\wlambda_j^{1/2}\psib_j$, \cite[Theorem 6.6.4]{friedman1982foundations}.
A straight calculation leads to $\|S\|_{\mbA} = \| \mbA^{1/2} S \mbA^{-1/2} \|_{\mbX}$ (see, e.g.,
\cite[Section C.1.3]{Hackbusch_book} for a finite-dimensional matrix counterpart).
Notice that, as for $\mbT$ and $\wT$, we can obtain the matrix representation
$\widetilde{\Lambda}^{1/2} \wT \widetilde{\Lambda}^{-1/2} $
of $\mbA^{1/2} \mbT \mbA^{-1/2}$, where $\wT$ is defined in \eqref{Ciaramella_mini_10_eq:wTT}
and $\widetilde{\Lambda}={\rm diag}\, (\wlambda_{1},\wlambda_{2},\dots)$.
Thus, as for $\| \mbT \|_{\mbX} = \| \wT \|_{\ell^2}$,
one can prove that $\| \mbA^{1/2} \mbT \mbA^{-1/2} \|_{\mbX} = \| \widetilde{\Lambda}^{1/2} \wT \widetilde{\Lambda}^{-1/2} \|_{\ell^2}$.
Hence, we get $\| \mbT \|_{\mbA} = \| \widetilde{\Lambda}^{1/2} \wT \widetilde{\Lambda}^{-1/2} \|_{\ell^2}$.
Now, if it holds that $V_c = {\rm span}\,\{ \psib_k \}_{k=1}^{m}$, then
Theorem \ref{thm:NEW} implies that
$$
\| \mbT \|_{\mbA} 
= \| \widetilde{\Lambda}^{1/2} \wT \widetilde{\Lambda}^{-1/2} \|_{\ell^2} 
= \| \Lambda_{\wm} \|_{\ell^2}
= | \lambda_{m+1} | = \rho(\mbT).
$$
It has been proved in \cite[Theorem 5.5]{xu_zikatanov_2017}, that this result is optimal in the sense that,
if $\mbA$ and $\mbG$ are symmetric and positive (semi-)definite, then the coarse space 
$V_c = {\rm span}\,\{ \psib_k \}_{k=1}^{m}$ minimizes the energy norm of the two-level 
operator $\mbT$. Clearly, if $\mbA$ has positive and negative eigenvalues (even though it remains symmetric),
this result is no longer valid. In this case, as we are going to see in Theorem \ref{Ciaramella_mini_10_thm:perturb},
the coarse space $V_c = {\rm span}\,\{ \psib_k \}_{k=1}^{m}$ is not necessarily 
(asymptotically) optimal.

The situation is very different if the functions $\{ \psib_k \}_{k=1}^\infty$ are not orthogonal
and $\mbA$ is not symmetric. To study this case, we work in a finite-dimensional setting
and assume that $\mbX = \mathbb{C}^N = {\rm span}\, \{ \psib_k \}_{k=1}^N$. Thus, both 
$\mbT$ and $\wT$ are matrices in $\mathbb{C}^{N \times N}$
and it holds that $\mbT V = V \wT^\top$, where $V=[\psib_1,\dots,\psib_N]$.
This means that $\mbT$ and $\wT$ are similar matrices and, thus, have the same spectrum.
Hence, using Theorem \ref{thm:eqiv1} we obtain a finite-dimensional counterpart of 
Theorem \ref{thm:conv:orth}, which does not require the orthogonality of $\{ \psib_k \}_{k=1}^N$.

\begin{theorem}[Convergence of a two-level method in finite-dimension]\label{thm:conv:non_orth}
Assume that $\mbX = \C^N$ and let the hypotheses (H1), (H2), (H3) and (H4) be satisfied.
If $V_c = {\rm span}\,\{ \psib_k \}_{k=1}^{m}$ (with $m=\wm<N$), then
$$
\rho(\mbT) = \rho(\wT) = | \lambda_{m+1} | <1.
$$
\end{theorem}

The coarse space $V_c = {\rm span} \, \{ \psib_k \}_{k=1}^{m}$
is not necessarily (asymptotically) optimal. A different choice can lead to
better asymptotic convergence or even to a divergent two-level method.
To show these results, we consider an analysis based on the perturbation 
of functions belonging to the coarse space $V_c = {\rm span} \, \{ \psib_k \}_{k=1}^{m}$.
We have seen in Theorem \ref{thm:eqiv1}, that an eigenvector of $\mbG$ is in the kernel of the two-level operator $\mbT$ if and only if it belongs to $V_c$. Assume that the coarse space cannot represent exactly one eigenvector $\psib$ of $\mbG$. How is the convergence of the method affected?
Let us perturb the coarse space $V_c$ using the eigenvector $\psib_{m+1}$, that is
$V_c(\varepsilon) := \Span \, \{ \psib_j + \varepsilon \, \psib_{m+1} \}_{j=1}^m$.
Clearly, $\text{dim}\, V_c(\varepsilon) = m$ for any $\varepsilon \in \mathbb{R}$.
In this case, \eqref{eq:Tn} holds with $\wm = m+1$ and 
$\wT \in \mathbb{C}^{N \times N}$ becomes 
\begin{equation}\label{Ciaramella_mini_10_thm:TTTT}
\wT(\varepsilon) = 
\begin{bmatrix}
\wT_{\wm}(\varepsilon) & 0 \\
X(\varepsilon) & \Lambda_{\wm}\\
\end{bmatrix},
\end{equation}
where we make explicit the dependence on $\varepsilon$.
Notice that $\varepsilon=0$ clearly leads to 
$\wT_{\wm}(0)=\text{ diag}\, (0,\dots,0,\lambda_{m+1}) \in \mathbb{C}^{\wm \times \wm}$,
and we are back to the unperturbed case with $\wT(0)=\wT$ having spectrum $\{0,\lambda_{m+1},\dots,\lambda_{N}\}$. Now, notice that
$\min_{\varepsilon \in \mathbb{R}} \rho(\wT(\varepsilon)) \leq \rho(\wT(0)) = | \lambda_{m+1} |$.
Thus, it is natural to ask the question: is this inequality strict?
Can one find an $\weps \neq 0$ such that
$\rho(\wT(\weps))=\min_{\varepsilon \in \mathbb{R}} \rho(\wT(\varepsilon))<\rho(\wT(0))$
holds? If the answer is positive, then we can conclude that choosing the coarse vectors
equal to the dominating eigenvectors of $\mbG$ is not an optimal choice.
Moreover, one could ask an opposite question: can one find a perturbation of the eigenvectors 
that leads to a divergent method ($\rho(\wT(\varepsilon))>1$)?
The next key result provides precise answers to these questions in the case $m=1$.

\begin{theorem}[Perturbation of $V_c$]\label{Ciaramella_mini_10_thm:perturb}
Let $(\psib_1,\lambda_1)$, $(\psib_2,\lambda_2)$ and $(\psib_3,\lambda_3)$ be three eigenpairs of $\mbG$, $\mbG \psib_j = \lambda_j \psib_j$ such that $0<|\lambda_3|<|\lambda_2| \leq |\lambda_1|$, $\| \psib_j \|_2 =1$, $j=1,2$, and denote with $\wla_j$ the eigenvalues of $A$ corresponding to $\psib_j$.
Assume that both $\lambda_j$ and $\wlambda_j$ are real for $j=1,2$ and
$\wla_1\wla_2>0$ \footnote{The hypothesis $\wla_1\wla_2>0$ is not restrictive. The same calculations can be performed for $\wla_1\wla_2<0$, as the sign of the product only influences the sign of the derivative $\frac{d \lambda(\varepsilon,0)}{d\varepsilon}.$} Define
$V_c := \Span\,\{ \psib_1 + \varepsilon \psib_2 \}$ with $\varepsilon \in \mathbb{R}$, and
$\gamma := \langle \psib_1 , \psib_2 \rangle \in [-1,1]$. Then
\begin{itemize}\itemsep0em
\item[{\rm (A)}] The spectral radius of $\wT(\varepsilon)$ is
$\rho(\wT(\varepsilon))=\max\{ |\lambda(\varepsilon,\gamma)| , | \lambda_3 | \}$, where
\begin{equation}\label{Ciaramella_mini_10_thm:lam}
\lambda(\varepsilon,\gamma) = \frac{\lambda_1 \wla_2 \varepsilon^2 + \gamma(\lambda_1 \wla_2 + \lambda_2 \wla_1)\varepsilon + \lambda_2 \wla_1}{\wla_2 \varepsilon^2 + \gamma (\wla_1+\wla_2)\varepsilon + \wla_1}.
\end{equation}

\item[{\rm (B)}] Let $\gamma=0$.
If $\lambda_1>\lambda_2>0$ or $0>\lambda_2>\lambda_1$, then
$\min\limits _{\varepsilon \in \mathbb{R}} \rho(\wT(\varepsilon)) = \rho(\wT(0))$.

\item[{\rm (C)}] Let $\gamma=0$,
If $\lambda_2>0>\lambda_1$ or $\lambda_1>0>\lambda_2$, then there exists an $\weps \neq 0$ such that
$\rho(\wT(\weps)) = |\lambda_3| = \min\limits_{\varepsilon \in \mathbb{R}} \rho(\wT(\varepsilon)) < \rho(\wT(0))$.

\item[{\rm (D)}] Let $\gamma\neq 0$.
If $\lambda_1>\lambda_2>0$ or $0>\lambda_2>\lambda_1$, then
there exists an $\weps \neq 0$ such that $|\lambda(\weps,\gamma)|<|\lambda_2|$
and hence
$\rho(\wT(\weps)) = \max\{|\lambda(\weps,\gamma)|,|\lambda_3|\} < \rho(\wT(0))$.

\item[{\rm (E)}] Let $\gamma\neq 0$.
If $\lambda_2>0>\lambda_1$ or $\lambda_1>0>\lambda_2$, then 
there exists an $\weps \neq 0$ such that
$\rho(\wT(\weps)) = |\lambda_3| = \min\limits _{\varepsilon \in \mathbb{R}} \rho(\wT(\varepsilon)) < \rho(\wT(0))$.

\item[{\rm (F)}] The map $\gamma \mapsto \lambda(\varepsilon,\gamma)$ has a vertical asymptote at $\gamma^*(\varepsilon)=-\frac{\varepsilon^2 \wlambda_2 + \wlambda_1}{\varepsilon(\wlambda_1+\wlambda_2)}$
for any $\varepsilon^2 \neq - \frac{(\lambda_2 \wlambda_1)(\wlambda_1+\wlambda_2)}{\lambda_1\wlambda_2^2+\wlambda_1^2 \lambda_2}$. 
Thus there exits a neighborhood $I(\gamma^*)$ such that $\forall \gamma \in I(\gamma^*)$, $\lambda(\varepsilon,\gamma)\notin (-1,1)$.

\end{itemize}
\end{theorem}

\begin{proof}
Since $m=1$, a direct calculation allows us to compute the matrix
$$\wT_{\wm}(\varepsilon)=\begin{bmatrix}
 \lambda_1 - \frac{\lambda_1\wla_1(1+\varepsilon \gamma)}{g} &  -\varepsilon \frac{\lambda_1\wla_1(1+\varepsilon \gamma)}{g} \\
- \frac{\lambda_2\wla_2(\varepsilon + \gamma)}{g} & \lambda_2 - \frac{(\varepsilon\lambda_2\wla_2)(\varepsilon + \gamma)}{g} \\
\end{bmatrix},$$ 
where $g=\wla_1 + \varepsilon \gamma[ \wla_1+\wla_2] + \varepsilon^2 \wla_2$.
The spectrum of this matrix is $\{0, \lambda(\varepsilon,\gamma)\}$,
with $\lambda(\varepsilon,\gamma)$ given in \eqref{Ciaramella_mini_10_thm:lam}.
Hence, point ${\rm (A)}$ follows recalling \eqref{Ciaramella_mini_10_thm:TTTT}.

To prove points ${\rm (B)}$, ${\rm (C)}$, ${\rm (D)}$ and ${\rm (E)}$ we use some properties
of the map $\varepsilon \mapsto \lambda(\varepsilon,\gamma)$. First, we notice that 
\begin{equation}\label{Ciaramella_mini_10_thm:prop}
\lambda(0,\gamma)=\lambda_2, \; \lim_{\varepsilon \rightarrow \pm \infty} \lambda(\varepsilon,\gamma) = \lambda_1,
\; \lambda(\varepsilon,\gamma)=\lambda(-\varepsilon,-\gamma).
\end{equation}
Second, the derivative of $\lambda(\varepsilon,\gamma)$ with respect to $\varepsilon$ is
\begin{equation}\label{Ciaramella_mini_10_thm:der}
\frac{d \lambda(\varepsilon,\gamma)}{d \varepsilon}
= \frac{(\lambda_1-\lambda_2)\wla_1\wla_2(\varepsilon^2+2\varepsilon/\gamma+1)\gamma}{(\wla_2 \varepsilon^2+\gamma(\wla_1+\wla_2)\varepsilon+\wla_1)^2}.
\end{equation}
Because of $\lambda(\varepsilon,\gamma)=\lambda(-\varepsilon,-\gamma)$
in \eqref{Ciaramella_mini_10_thm:prop}, we can assume without loss of generality that
$\gamma \geq 0$.

Let us now consider the case $\gamma=0$. In this case, the derivative \eqref{Ciaramella_mini_10_thm:der}
becomes $\frac{d \lambda(\varepsilon,0)}{d \varepsilon}
= \frac{(\lambda_1-\lambda_2)\wla_1\wla_2 2\varepsilon}{(\wla_2 \varepsilon^2+\wla_1^2)^2}$.
Moreover, since $\lambda(\varepsilon,0)=\lambda(-\varepsilon,0)$ we can assume that $\varepsilon \geq 0$.

Case ${\rm (B)}$.
If $\lambda_1>\lambda_2>0$, then $\frac{d \lambda(\varepsilon,0)}{d \varepsilon}>0$ for all $\varepsilon>0$.
Hence, $\varepsilon \mapsto \lambda(\varepsilon,0)$ is monotonically increasing, $\lambda(\varepsilon,0) \geq 0$ for all $\varepsilon>0$ and, thus, the minimum of $\varepsilon \mapsto |\lambda(\varepsilon,0)|$
is attained at $\varepsilon = 0$ with $|\lambda(0,0)|=|\lambda_2|>|\lambda_3|$, and the result follows.
Analogously, if $0>\lambda_2>\lambda_1$, then $\frac{d \lambda(\varepsilon,0)}{d \varepsilon}<0$ 
for all $\varepsilon>0$.
Hence, $\varepsilon \mapsto \lambda(\varepsilon,0)$ is monotonically decreasing, 
$\lambda(\varepsilon,0) < 0$ for all $\varepsilon>0$ and
the minimum of $\varepsilon \mapsto |\lambda(\varepsilon,0)|$
is attained at $\varepsilon = 0$.

Case ${\rm (C)}$.
If $\lambda_1>0>\lambda_2$, then $\frac{d \lambda(\varepsilon,0)}{d \varepsilon}>0$
for all $\varepsilon >0$. Hence, $\varepsilon \mapsto \lambda(\varepsilon,0)$ is monotonically 
increasing and such that $\lambda(0,0)=\lambda_2<0$ and $\lim_{\varepsilon \rightarrow \infty} \lambda(\varepsilon,0) = \lambda_1>0$. Thus, the continuity of the map $\varepsilon \mapsto \lambda(\varepsilon,0)$
guarantees the existence of an $\weps>0$ such that $\lambda(\weps,0)=0$.
Analogously, if $\lambda_2>0>\lambda_1$, then $\frac{d \lambda(\varepsilon,0)}{d \varepsilon}<0$
for all $\varepsilon>0$ and the result follows by the continuity of $\varepsilon \mapsto \lambda(\varepsilon,0)$.

Let us now consider the case $\gamma>0$. The sign of 
$\frac{d \lambda(\varepsilon,\gamma)}{d \varepsilon}$ is affected by the term
$f(\varepsilon):=\varepsilon^2+2\varepsilon/\gamma+1$, which appears at the numerator
of \eqref{Ciaramella_mini_10_thm:der}.
The function $f(\varepsilon)$ is strictly convex, attains its minimum
at $\varepsilon=-\frac{1}{\gamma}$, and is negative in $(\bareps_1,\bareps_2)$
and positive in $(-\infty,\bareps_1)\cup(\bareps_2,\infty)$, with $\bareps_1,\bareps_2=-\frac{1\mp \sqrt{1-\gamma^2}}{\gamma}$.

Case ${\rm (D)}$.
If $\lambda_1>\lambda_2>0$, then $\frac{d \lambda(\varepsilon,\gamma)}{d \varepsilon}>0$ for all 
$\varepsilon > \bareps_2$. Hence, $\frac{d \lambda(0,\gamma)}{d \varepsilon}>0$, which means that
there exists an $\weps<0$ such that $|\lambda(\weps,\gamma)|<|\lambda(0,\gamma)|=|\lambda_2|$.
The case $0>\lambda_2>\lambda_1$ follows analogously.

Case ${\rm (E)}$.
If $\lambda_1>0>\lambda_2$, then $\frac{d \lambda(\varepsilon,\gamma)}{d \varepsilon}>0$
for all $\varepsilon>0$. Hence, by the continuity of $\varepsilon \mapsto \lambda(\varepsilon,\gamma)$
(for $\varepsilon\geq 0$) there exists an $\weps>0$ such that $\lambda(\weps,\gamma)=0$.
The case $\lambda_2>0>\lambda_1$ follows analogously.

Case ${\rm (F)}$. It is sufficient to observe that the denominator of 
$\lambda(\varepsilon,\gamma)$ is equal to zero for $\gamma=\gamma^*$, while the numerator
is nonzero and finite.
Hence, $\lim_{\gamma \rightarrow \gamma^*}|\lambda(\varepsilon,\gamma)|=+\infty$. 
As the map $\gamma \mapsto \lambda(\varepsilon,\gamma)$ is continuous in $(-\infty,\gamma^*) \cup (\gamma^*,+\infty)$, 
the result follows.
\end{proof}

Theorem \ref{Ciaramella_mini_10_thm:perturb} and its proof say that, if the two eigenvalues
$\lambda_1$ and $\lambda_2$ have opposite signs (but they could be equal in modulus),
then it is always possible to find an $\varepsilon \neq 0$ such that the 
coarse space $V_c := \Span\{ \psib_1 + \varepsilon \psib_2 \}$
leads to a faster method than $V_c := \Span\{ \psib_1 \}$, even though both are one-dimensional subspaces. 
In addition, if $\lambda_3 \neq 0$ the former leads to
a two-level operator $T$ with a larger kernel than the one corresponding to the latter.
The situation is completely different if
$\lambda_1$ and $\lambda_2$ have the same sign. In this case, the orthogonality parameter
$\gamma$ is crucial. If $\psib_1$ and $\psib_2$ are orthogonal ($\gamma=0$), then 
one cannot improve $V_c:= \Span\{ \psib_1 \}$ by a simple perturbation
using $\psib_2$. However, if $\psib_1$ and $\psib_2$ are not orthogonal ($\gamma \neq 0$),
then one can still find an $\varepsilon \neq 0$ such that $\rho(\wT(\varepsilon)) < \rho(\wT(0))$. 

Notice that, if $|\lambda_3|=|\lambda_2|$, Theorem \ref{Ciaramella_mini_10_thm:perturb} shows that one cannot obtain a $\rho(T)$ smaller than $|\lambda_2|$ using a one-dimensional perturbation. However, if one optimizes the entire coarse space $V_c$ (keeping $m$ fixed),
then one can find coarse spaces leading to better contraction factor of the two-level
iteration, even though $|\lambda_3|=|\lambda_2|$.

Theorem \ref{Ciaramella_mini_10_thm:perturb} has another important meaning.
If the eigenvectors $\psib_j$ are not orthogonal and one defines the coarse space $V_c$ 
using approximations to $\psib_j$, then the two-level method is not necessarily convergent. 
Even though the one-level iteration characterized by $\mbG$ is convergent, a wrong choice of coarse 
functions can lead to a divergent iteration. This phenomenon is observed numerically in
Section \ref{sec:num_exp}. However, the analysis performed in Theorem \ref{Ciaramella_mini_10_thm:perturb}
suggests a remedy to this situation.

\begin{corollary}[Correction of perturbed coarse space functions]\label{cor:corr_Vc}
Let the hypotheses of Theorem \ref{Ciaramella_mini_10_thm:perturb} be satisfied.
For any $r \in \mathbb{N}$ it holds that
\[ \mbG^r V_c=\spa\left\{\psib_1 +\varepsilon_r \psib_2\right\},\]
with $\varepsilon_r = \frac{\lambda_2^r}{\lambda_1^r}\varepsilon$.
Moreover, if the coarse space $V_c$ is replaced by $\mbG^r V_c$ (hence $\varepsilon$ is replaced by $\varepsilon_r $), 
there exists an $\widehat{r} \in \mathbb{N}$ such that $\rho(\varepsilon_{\widehat{r}},\gamma)<1$ for any $\gamma \in [-1,1]$. 
\end{corollary}
\begin{proof}
By computing
\[ \mbG^r V_c=\mbG^r\spa\left\{\psib_1+\varepsilon \psib_2\right\}=\text{span}\left\{\psib_1 +\frac{\lambda_2^r}{\lambda_1^r}\varepsilon \psib_2\right\},\]
one obtains the first statement.
The second statement follows from Theorem \ref{Ciaramella_mini_10_thm:perturb}, which guarantees that $\rho(0,\gamma)=|\lambda_2|<1$.
Since the map $\varepsilon \mapsto \rho(\varepsilon,\gamma)$ is continuous 
and $|\lambda_2|/|\lambda_1|<1$, there exists
a sufficiently large $r\in \mathbb{N}$ such that $\rho(\varepsilon_r,\gamma)<1$ holds.
\end{proof}

Corollary \ref{cor:corr_Vc} has the following important consequence.
If some ``bad-convergent'' eigenvectors of $\mbG$ are not sufficiently well represented 
by the coarse space functions, one can apply $r$ smoothing steps to the coarse space functions. 
The new space $\mbG^r V_c$ is a better approximation to the ``bad-convergent'' eigenfunctions of $\mbG$. 
Therefore, one can replace $V_c$ by $\mbG^r V_c$ to improve
the convergence properties of the two-level method.

\subsection{Local coarse functions}\label{subs:localCF}

In this section, we consider an operator $\mbG$ having the block form
$$
\mbG = \begin{bmatrix}
0 & \mbG_1 \\
\mbG_2 & 0  \\
\end{bmatrix}
$$
and defined on the space $\mbX := \wmbX \times \wmbX$, where $\wmbX$ is a Hilbert space
endowed by an inner product $\llang \cdot , \cdot \rrang$.
The corresponding operator $\mbA$ is $\mbA = \mbI - \mbG$.
Moreover, we assume that the operators $\mbG_j$, $j=1,2$, have the same eigenvectors $\{\psi_k\}_{k=1}^\infty$
forming an orthonormal basis of $\wmbX$ with respect to $\llang \cdot , \cdot \rrang$.
The eigenvalues of $\mbG_j$, for $j=1,2$, are denoted by $\theta_j(k)$.
This is exactly the structure of the substructured domain decomposition problem introduced in 
Section \ref{sec:Two-Level-method-Laplace} and corresponding to two subdomains,
as the following examples show.

\begin{example}\label{ex:rect1}
Consider a rectangle $\Omega:=(-L_1,L_2)\times (0,\widetilde{L})$, $\widetilde{L},L_1,L_2>0$
that is decomposed as $\Omega=\Omega_1 \cup \Omega_2$ by two overlapping subdomains 
$\Omega_1:=(-L_1,\delta)\times (0,\widetilde{L})$ and 
$\Omega_2:=(-\delta,L_2)\times (0,\widetilde{L})$ for some $0<\delta<\min(L_1,L_2)$.
The two interfaces are $\Gamma_1:=\{\delta\}\times (0,\widetilde{L})$ and 
$\Gamma_2:=\{-\delta\}\times (0,\widetilde{L})$. 
If $\mathcal{L}=-\Delta$, then the Schwarz operators $\mbG_1$ and $\mbG_2$ are
diagonalized by the sine-Fourier functions 
$\psi_k(y)=\sin(k y \pi /\widetilde{L})$,
for $k=1,2,\dots$
The eigenvalues of $\mbG_j$ are 
$\theta_j(k)=\sinh\left(\frac{k\pi}{\widetilde{L}}(L_j-\delta)\right) / \sinh\left(\frac{k\pi}{\widetilde{L}}(L_j+\delta)\right)$,
for $j=1,2$; see, e.g., \cite{Gander2011,Gabriele-Martin}.
\end{example}

\begin{example}\label{ex:circles}
Consider a disc $\Omega$ of radius $r$ and centered in the origin. One can decompose
$\Omega$ as the union of two overlapping subdomains $\Omega_1$ and $\Omega_2$, where
$\Omega_1$ is a disc of radius $r_1 < r$ and centered in the origin, and $\Omega_2$
is an annulus of external radius equal to $r$ and internal radius $r_2 \in (r_1,r)$.
If $\mathcal{L}=-\Delta+\eta$ with $\eta >0$, then the two Schwarz operators $\mbG_1$ and
$\mbG_2$ are diagonalized by periodic Fourier functions defined on circles; see, e.g., \cite{GanderXu1}.
\end{example}


Now, we assume that $V_c:=(\spa \{\psi_1,\psi_2,\cdots,\psi_m\})^2\subset \mbX$.
Prolongation and restriction operators are given (as in \eqref{prolungation-restriction_intro}) by
\begin{equation}\label{prolungation-restriction}
\medmuskip=-0.3mu
\thinmuskip=-0.3mu
\thickmuskip=-0.3mu
\nulldelimiterspace=0.05pt
\scriptspace=0.05pt    
\arraycolsep0.05em
P\begin{bmatrix}
{\bf v}\\
{\bf w}\\
\end{bmatrix}
:=\begin{bmatrix}
\sum\limits_{j=1}^{m}  ({\bf v})_j\psi_j, &
\sum\limits_{j=1}^{m}  ({\bf w})_j\psi_j
\end{bmatrix}^\top, 
\quad
R\begin{bmatrix}
f\\g
\end{bmatrix}:=\begin{bmatrix}
\llang \psi_1,f\rrang, &
\cdots, &
\llang \psi_m,f\rrang, &
\llang \psi_1,g\rrang, &
\cdots, &
\llang \psi_m,g \rrang
\end{bmatrix}^\top.
\end{equation}
The restriction of $\mbA$ onto the coarse space $V_c$ is $\mbA_c=R\mbA P$.
Notice that, since in this case $\mbA (V_c) \subseteq V_c$,
Theorem \ref{lemma:invAc} guarantees that the operator $\mbA_c$ is invertible.
Now, we study the spectral properties of $\mbT$ defined in \eqref{eq:itT}.

\begin{theorem}[Convergence of the two-level method with local coarse space functions]\label{thm:S2S_conv}
Consider the coarse space $V_c=(\spa \{\psi_1,\psi_2,\cdots,\psi_m\})^2$ and the operators $P$ and
$R$ defined in \eqref{prolungation-restriction}. 
All pairs $(\psi_k,\psi_{\ell})$ with $k,\ell\leq m$ are in the kernel of the operator $\mbT$.
Moreover, for any $S \in \mathcal{L}(\mathcal{X})$ denote by  $\|S\|_{\op}:=\sup\limits_{\|{\bf v}\|_{\infty}=1}\|S{\bf v}\|_{\infty}$, where $\|{\bf v}\|_{\infty}:=\max_{j=1,2} \|v_j\|$, with $\|v_j\|^2=\llang v_j$, $v_j\rrang$.
If the eigenvalues $\theta_{j}(k)$, $j=1,2$, are in absolute value 
non-increasing functions of $k$, then the spectral radius of $\mbT$,
$\rho(\mbT):=~\lim\limits_{n\rightarrow \infty}\|\mbT^n\|_{\op}^{\frac{1}{n}}$,  is given by
\begin{equation*}
\medmuskip=0mu
\thinmuskip=0mu
\thickmuskip=0mu
\nulldelimiterspace=1.5pt
\scriptspace=1.5pt    
\arraycolsep1.5em
\rho(\mbT)=
\begin{cases}
|\theta_1(m+1)\theta_2(m+1)|^{\frac{n_1+n_2}{2}}, \text{ if } n_1, n_2 \text{ are both even or odd},\\
|\theta_1(m+1)\theta_2(m+1)|^{\frac{n_1+n_2-1}{2}}\max\{|\theta_1(m+1)|,|\theta_2(m+1)|\},
\text{ otherwise.}
\end{cases}
\end{equation*}
\end{theorem}

\begin{proof}
Let us suppose that both $n_1$ and $n_2$ are even. The other cases can be treated similarly.
For $n_1$ even we define $\pi^{n_1}(k):=\theta^{\frac{n_1}{2}}_1(k)\theta^{\frac{n_1}{2}}_2(k)$ 
and study the action of the operator $\mbT$ on a vector $\left[ \psi_k, \psi_{\ell} \right]^\top$:
\begin{equation*}
\mbT \begin{bmatrix}
\psi_k\\
\psi_{\ell}
\end{bmatrix}
=\mbG^{n_2}(\mbI-P\mbA_c^{-1}R\mbA )\mbG^{n_1}
\begin{bmatrix}
\psi_k\\
\psi_{\ell}
\end{bmatrix}.
\end{equation*}
We begin with the case $k\leq m$ and $\ell\leq m$. First, let us compute the action of the
operator $R\mbA \mbG^{n_1}$ on $\left[ \psi_k, \psi_{\ell} \right]^\top$.
Since the operators $\mbG_j$ are diagonalized by the basis $\{ \psi_k\}_k$ one obtains
$\mbG^{n_1}
\begin{small}
\begin{bmatrix}
\psi_k\\
\psi_{\ell}
\end{bmatrix}
= \begin{bmatrix}
\pi^{n_1}(k)\psi_k\\ \pi^{n_1}(\ell)\psi_{\ell}
\end{bmatrix}
\end{small}$.
The action of $\mbA$ on $\left[ \pi^{n_1}(k)\psi_k, \pi^{n_1}(\ell)\psi_{\ell} \right]^\top$ is
\begin{equation*}
A\begin{bmatrix}
\pi^{n_1}(k)\psi_k\\ \pi^{n_1}(\ell)\psi_{\ell}
\end{bmatrix}=\begin{bmatrix}
I_d & -\mbG_1\\
-\mbG_2 & I_d
\end{bmatrix}\begin{bmatrix}
\pi^{n_1}(k)\psi_k\\ \pi^{n_1}(\ell)\psi_{\ell}
\end{bmatrix}=\begin{bmatrix}
\pi^{n_1}(k)\psi_k\\ \pi^{n_1}(\ell)\psi_{\ell}
\end{bmatrix}-\begin{bmatrix}
\pi^{n_1}(\ell)\theta_1(\ell)\psi_{\ell}\\ 
\pi^{n_1}(k)\theta_2(k)\psi_k\end{bmatrix}.
\end{equation*}
Since $\mbA$ is invertible and has the form $\mbA=\mbI - \mbG$,
the eigenvalues $\theta_j(k)$ must different from one. Hence, the product
$\mbA \left[ \pi^{n_1}(k)\psi_k, \pi^{n_1}(\ell)\psi_{\ell} \right]^\top \neq 0$.
Now, the application of the restriction operator $R$ on 
$\mbA \left[ \pi^{n_1}(k)\psi_k, \pi^{n_1}(\ell)\psi_{\ell} \right]^\top$ gives us
\begin{equation*}
R\mbA \begin{bmatrix}
\pi^{n_1}(k)\psi_k\\ \pi^{n_1}(\ell)\psi_{\ell}
\end{bmatrix}= \begin{bmatrix}
\pi^{n_1}(k){\bf e}_{k}\\ \pi^{n_1}(\ell){\bf e}_{\ell}
\end{bmatrix}-\begin{bmatrix}
\pi^{n_1}(\ell)\theta_1(\ell){\bf e}_{\ell}\\ \pi_1^{n_1}(k)\theta_2(k){\bf e}_{k}\end{bmatrix}=\Lambda \begin{bmatrix}
\pi^{n_1}(k){\bf e}_{k}\\ \pi^{n_1}(\ell){\bf e}_{\ell}
\end{bmatrix},
\end{equation*}
where ${\bf e}_{k}$ and ${\bf e}_{\ell}$ are canonical vectors 
in $\mathbb{R}^m$ and
$\Lambda :=\begin{small}
\begin{bmatrix}
I & -\theta_1(\ell)I\\
-\theta_2(k)I & I
\end{bmatrix}
\end{small}$,
with $I$ the $m\times m$ identity matrix.
We have then obtained
\begin{equation}\label{actionRAG}
R\mbA \mbG^{n_1} \begin{bmatrix}
\psi_k\\ \psi_{\ell}
\end{bmatrix}=\Lambda  \begin{bmatrix}
\pi^{n_1}(k){\bf e}_{k}\\ \pi^{n_1}(\ell){\bf e}_{\ell}
\end{bmatrix}.
\end{equation}
Now, by computing
\begin{equation*}
\medmuskip=-0.1mu
\thinmuskip=-0.1mu
\thickmuskip=-0.1mu
\nulldelimiterspace=0.05pt
\scriptspace=0.05pt    
\arraycolsep0.05em
\begin{split}
\mbA_c\begin{bmatrix}
\pi^{n_1}(k){\bf e}_{k}\\ \pi^{n_1}(\ell){\bf e}_{\ell}
\end{bmatrix}
&=R\mbA
\begin{bmatrix}
\pi^{n_1}(k)\psi_k\\ \pi^{n_1}(\ell)\psi_{\ell}
\end{bmatrix}
=R \begin{bmatrix}
\pi^{n_1}(k)\psi_k-\pi^{n_1}(\ell)\theta_1(\ell)\psi_{\ell}\\ \pi^{n_1}(\ell)\psi_{\ell}-\pi^{n_1}(k)\theta_2(k)\psi_k
\end{bmatrix}=\Lambda \begin{bmatrix}
\pi^{n_1}(k){\bf e}_{k}\\ \pi^{n_1}(\ell){\bf e}_{\ell}
\end{bmatrix}
\end{split}
\end{equation*}
one obtains the action of $\mbA_c^{-1}$ on $\Lambda 
\begin{small}\begin{bmatrix}
\pi^{n_1}(k){\bf e}_{k}\\ \pi^{n_1}(\ell){\bf e}_{\ell}
\end{bmatrix}\end{small}$, that is
\begin{equation}\label{actionA_cinverse}
\begin{bmatrix}
\pi^{n_1}(k){\bf e}_{k}\\ \pi^{n_1}(\ell){\bf e}_{\ell}
\end{bmatrix}=\mbA_c^{-1} \Lambda \begin{bmatrix}
\pi^{n_1}(k){\bf e}_{k}\\ \pi^{n_1}(\ell){\bf e}_{\ell}
\end{bmatrix}.
\end{equation}
Using \eqref{actionRAG} and \eqref{actionA_cinverse} we have
\begin{equation}\label{actionT}
\begin{split}
(\mbI-&P \mbA_c^{-1}R\mbA ) \mbG^{n_1}\begin{bmatrix}
\psi_k \\ \psi_{\ell}
\end{bmatrix}
=\begin{bmatrix}
\pi^{n_1}(k)\psi_k\\ \pi^{n_1}(\ell)\psi_{\ell}
\end{bmatrix}-P \mbA_c^{-1}\Lambda \begin{bmatrix}
\pi^{n_1}(k){\bf e}_{k}\\ \pi^{n_1}(\ell){\bf e}_{\ell}
\end{bmatrix} \\
&=\begin{bmatrix}
\pi^{n_1}(k)\psi_k\\ \pi^{n_1}(\ell)\psi_{\ell}
\end{bmatrix}-P
\begin{bmatrix}
\pi^{n_1}(k){\bf e}_{k}\\ \pi^{n_1}(\ell){\bf e}_{\ell}
\end{bmatrix}
=\begin{bmatrix}
\pi^{n_1}(k)\psi_k\\ \pi^{n_1}(\ell)\psi_{\ell}
\end{bmatrix}-\begin{bmatrix}
\pi^{n_1}(k)\psi_k\\ \pi^{n_1}(\ell)\psi_{\ell}
\end{bmatrix}=0.
\end{split}
\end{equation}
This means that all the pairs $(\psi_k,\psi_{\ell})$
with $k\leq m$ and $\ell\leq m$ are in the kernel of $\mbT$. 
The result for $n_1$ odd follows by similar calculations.

Next, let us consider the case $k>m$ and $\ell\leq m$. 
Recalling that the basis $\{\psi_k\}_k$ is orthonormal, one has 
\begin{equation*}
R\mbA \mbG^{n_1}\begin{bmatrix}
\psi_k\\ \psi_{\ell}
\end{bmatrix}= R\left( \begin{bmatrix}
\pi^{n_1}(k)\psi_{k}\\ \pi^{n_1}(\ell)\psi_{\ell}
\end{bmatrix}-\begin{bmatrix}
\pi^{n_1}(\ell)\theta_1(\ell)\psi_{\ell}\\ \pi^{n_1}(k)\theta_2(k)\psi_{k}\end{bmatrix}\right)
=\begin{bmatrix}
0 & -\theta_1(\ell)I\\ 0 & I
\end{bmatrix} \begin{bmatrix}
0\\ \pi^{n_1}(\ell){\bf e}_{\ell}
\end{bmatrix}.
\end{equation*}
Similarly as before, we compute
\begin{equation*}
\mbA_c\begin{bmatrix}
0 \\ \pi^{n_1}(\ell){\bf e}_{\ell}
\end{bmatrix}
=R\mbA\begin{bmatrix}
0 \\ \pi^{n_1}(\ell)\psi_{\ell}
\end{bmatrix}
=R\begin{bmatrix}
-\pi^{n_1}(\ell)\theta_1(\ell)\psi_{\ell}\\\pi^{n_1}(\ell)\psi_{\ell}
\end{bmatrix}
=\begin{bmatrix}
0 & -\theta_1(\ell)I\\ 0 & I
\end{bmatrix}\begin{bmatrix}
0 \\ \pi^{n_1}(\ell){\bf e}_{\ell}
\end{bmatrix},
\end{equation*}
which implies that 
\begin{equation*}
\begin{bmatrix}
0 \\ \pi^{n_1}(\ell){\bf e}_{\ell}
\end{bmatrix}=\mbA_c^{-1}\begin{bmatrix}
0 & -\theta_1(\ell)I\\ 0 & I
\end{bmatrix}
\begin{bmatrix}
0 \\ \pi^{n_1}(\ell){\bf e}_{\ell}
\end{bmatrix}.
\end{equation*}
Thus, we have 
\begin{equation}\label{actionT2}
\begin{split}
\mbT \begin{bmatrix}
\psi_k \\ \psi_{\ell}
\end{bmatrix}
&=\mbG^{n_2}\left(\begin{bmatrix}
\pi^{n_1}(k)\psi_k\\ \pi^{n_1}(\ell)\psi_{\ell}
\end{bmatrix}-P\mbA_c^{-1}
\begin{bmatrix}
0 & -\theta_1(\ell)I\\ 
0 & I\\
\end{bmatrix}
\begin{bmatrix}
0\\ \pi^{n_1}(\ell){\bf e}_{\ell}
\end{bmatrix}\right)\\
&=\mbG^{n_2}\left(\begin{bmatrix}
\pi^{n_1}(k)\psi_k\\ \pi^{n_1}(\ell)\psi_{\ell}
\end{bmatrix}-P
\begin{bmatrix}
0 \\ \pi^{n_1}(\ell){\bf e}_{\ell}
\end{bmatrix}\right)
=\begin{bmatrix}
\pi^{n_1+n_2}(k)\psi_k \\ 0
\end{bmatrix}.
\end{split}
\end{equation}
For the remaining case $k>m$ and $\ell>m$, the same arguments as before imply that
\begin{align}\label{actionT3}
\mbT \begin{bmatrix}
\psi_k \\ \psi_{\ell}
\end{bmatrix}
=\mbG^{n_2}(\mathbb{I}-P \mbA_c^{-1}R \mbA)\mbG^{n_1}
\begin{bmatrix}
\psi_k \\ \psi_{\ell}
\end{bmatrix}
=\mbG^{n_2}\mbG^{n_1}\begin{bmatrix}
\psi_k \\ \psi_{\ell}
\end{bmatrix}
=\begin{bmatrix}
\pi^{n_1+n_2}(k)\psi_k \\ \pi^{n_1+n_2}(\ell)\psi_{\ell} 
\end{bmatrix}.
\end{align}

We can now study the norm of $\mbT$. 
To do so, we first use \eqref{actionT}, \eqref{actionT2} and \eqref{actionT3},
and that $\{ \psi_k,\psi_{\ell}\}_{k,\ell}$ is a basis of $\mbX$, to write
\begin{equation*}
\mbT {\bf v} = 
\mbT \begin{bmatrix}
\sum_{k=1}^{\infty} {\bf c}_k \psi_k \\
\sum_{\ell=1}^{\infty} {\bf d}_\ell \psi_\ell \\
\end{bmatrix}
=
\mbT \begin{bmatrix}
\sum_{k=m+1}^{\infty} \pi(k) {\bf c}_k \psi_k \\
\sum_{\ell=m+1}^{\infty} \pi(\ell) {\bf d}_\ell \psi_\ell \\
\end{bmatrix},
\end{equation*}
for any ${\bf v} \in \mbX$. Since $|\theta_1(k)|$ and $|\theta_2(k)|$ are non-increasing functions of $k$,
$|\pi(k)|$ is also a non-increasing function of $k$. Therefore, using that the basis 
$\{ \psi_k,\psi_{\ell}\}_{k,\ell}$ is orthonormal, we get
\begin{equation*}
\|\mbT\|_{\op}
=\sup\limits_{\|{\bf v}\|_{\infty}=1}\|\mbT{\bf v}\|_{\infty}
\leq \max\left(|\pi^{n_1+n_2}(k)|,|\pi^{n_1+n_2}(\ell)|\right)=|\pi^{n_1+n_2}(m+1)|.
\end{equation*}
This upper bound is achieved at ${\bf v}=[\psi_{m+1} , 0]^\top$.
Hence, $\|\mbT\|_{\op} = |\pi^{n_1+n_2}(m+1)|$.
Now, a similar direct calculation leads to $\|\mbT^n\|_{\op} = |\pi^{n(n_1+n_2)}(m+1)|$,
which implies that
$\rho(\mbT)=\lim\limits_{n\rightarrow \infty} (\|\mbT^n\|_{\op})^{1/n}=|\pi^{n_1+n_2}(m+1)|$.
\end{proof}

Theorems \ref{thm:eqiv1}, \ref{thm:conv:non_orth} and \ref{thm:S2S_conv} show that the choice of the basis functions to construct $V_c$ can affect drastically the convergence of the method.
On the one hand, an inappropriate choice of $V_c$ can lead to a two-level method that performs as the corresponding one-level method.
On the other hand, a good choice of $V_c$ can even make convergent a non-converging stationary method;
see, e.g., \cite{CiaramellaRefl}.

\section{Numerical construction of the coarse space}\label{sec:numerical_Vc}

The construction of a good coarse space $V_c$ is not an easy task.
Several works rely on the solution of generalized eigenvalue problems on the interfaces;
see, e.g., \cite{Bjorstad2018,gander2017shem,gander2019song,Klawonn}; see also \cite{Chaouqui2019,CiaramellaRefl}. Despite one could re-use these techniques to build a coarse space for the S2S method, see the S2S-HEM method discussed in Section \ref{sec:num_exp}, we now present two alternative numerical approaches for the generation of coarse space functions.
The first one relies on the principal component
analysis (PCA) and share some similarities with some of the strategies
presented in \cite{Brezina2005,xu_zikatanov_2017}.
The second approach is based on modeling the two-level iteration
operator as a deep neural network where the coarse space functions are regarded 
as variables to be optimized. A similar approach has been presented
in the context of multigrid methods in \cite{katrutsa2017deep}.

We remark that the S2S framework facilitates the use of these two numerical techniques which could be even numerically unfeasible if applied to a two-level volume method. Indeed, at the discrete level, the substructured coarse functions are much shorter vectors than the corresponding volume ones. This means that, for the PCA approach, one has to compute the SVD decomposition of a much smaller matrix, while for the deep neural network approach, the neural net has much less parameters to optimize.

\subsection{A PCA approach for an automatic coarse space generation}\label{sec:random_S2S}

The idea that we present in this section
is to construct an approximation of the image of the smoother $G^r$, for some positive integer $r$.
In fact, the image of $G^r$ contains information about the ``bad converging'' eigenvectors of $G$.
Notice that ${\rm im}(G^r)={\rm im}(G^rX)$ for any surjective matrix $X$. Therefore, the idea is
to construct a coarse space using the information contained in $G^rX$, for some randomly chosen matrix $X$.
Clearly, if $\rho(G)<1$ and $r$ is large, then one expects that the slowest convergent
eigenvectors are predominant in $G^rX$. Notice also the relation of this idea with the perturbation
Theorem \ref{Ciaramella_mini_10_thm:perturb} and Corollary \ref{cor:corr_Vc}.

Motivated by these observations, we use a principal component analysis (PCA),
also known as proper orthogonal decomposition (POD); see, e.g., \cite{Gubisch2017} and references therein.
We consider the following procedure.

\begin{enumerate}\itemsep0em
\item Consider a set of $q$ linearly independent randomly generated vectors \linebreak
$\{ {\bf s}_k \}_{k=1}^q \subset \mathbb{R}^{N^s}$, where $N^s$ is the number of degrees of freedom on the interfaces, and define the matrix
$S = [ {\bf s}_1 ,\cdots, {\bf s}_q ]$. Here, $q \approx m$ and $m$ is the desired dimension of the coarse space.
\item Use the vectors ${\bf s}_k$ as initial vectors and perform $r$ smoothing steps to create the
matrix $W = G^r S$.
This computation can be performed in parallel and we assume that $r$ is ``small''.
\item Compute the SVD of $W$: $W=U \Sigma V^\top$. This is cheap ($O(q(N^s)^2)$) because 
$W\in \mathbb{R}^{N^s \times q}$ is ``small'', since $q$ is ``small'' and ${\bf v}_k$ are interface vectors.
\item Since the left-singular vectors (corresponding to the non-zero singular values) 
span the image of $W$, we define $V_c:= \spa\{ {\bf u}_j \}_{j=1}^{m}$ and $P:=[{\bf u}_1,\cdots,{\bf u}_{m}]$.
\end{enumerate}

We wish to remark that, in light of Theorem \ref{Ciaramella_mini_10_thm:perturb} and Corollary \ref{cor:corr_Vc},
one can also use approximations of the eigenfunctions of $G$ (if available) in the matrix $S$ (in step 1 above).
A numerical study of the above procedure is given in Section \ref{sec:num_exp}.
To qualitatively describe the obtained coarse space, we prove the following bound.

\begin{lemma}[Approximation of the random generated coarse space]\label{lemma:randomPCS}
Consider a full rank orthogonal matrix $X \in \mathbb{R}^{N^s \times N^s}$ and its decomposition $X=[S,\widetilde{S}]$.
Let $W=\mbG^r [S,0]$ and $P_\ell = U_\ell \Sigma_\ell V_\ell^\top$ be the rank-$\ell$ SVD of $W$ ($\ell \leq m$),
where $(\Sigma_\ell)_{j,j} = \sigma_j$, $j=1,\dots,\ell$ are the singular values of $W$.
Then, it holds that
$$\| P_\ell - G^r X \|_2 \leq \sigma_{\ell+1} + \| G^r \|_2.$$
\end{lemma}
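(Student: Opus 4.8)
The plan is to bound the target by inserting the full matrix $W$ via the triangle inequality and then to control the two resulting pieces with completely different tools. Concretely, I would start from
\[
\| P_\ell - G^r X \|_2 \leq \| P_\ell - W \|_2 + \| W - G^r X \|_2,
\]
treating the first term as an approximation-theoretic quantity and the second as a purely algebraic one.

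For the first term I would invoke the Eckart--Young--Mirsky theorem. Since $P_\ell$ is by definition the rank-$\ell$ truncated SVD of $W$, it is the best rank-$\ell$ approximation of $W$ in the spectral norm, and the approximation error equals the first discarded singular value, i.e.\ $\| W - P_\ell \|_2 = \sigma_{\ell+1}$. (If $W$ has rank at most $\ell$, then $\sigma_{\ell+1}=0$ and the estimate remains valid.) This immediately yields $\| P_\ell - W \|_2 = \sigma_{\ell+1}$.

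For the second term the key observation is that $W$ and $G^r X$ differ only through the columns that were zeroed out. Using $W = G^r [S,0]$ and $X = [S,\widetilde{S}]$, I would compute
\[
W - G^r X = G^r\bigl([S,0] - [S,\widetilde{S}]\bigr) = -\,G^r [0,\widetilde{S}],
\]
and then apply submultiplicativity of the spectral norm to obtain $\| W - G^r X \|_2 \leq \| G^r \|_2\, \| [0,\widetilde{S}] \|_2$. It then remains to bound the block matrix $[0,\widetilde{S}]$: its nonzero singular values coincide with those of $\widetilde{S}$, so $\| [0,\widetilde{S}] \|_2 = \| \widetilde{S} \|_2$, and since $\widetilde{S}$ consists of columns of the orthogonal matrix $X$, we have $\widetilde{S}^\top \widetilde{S} = I_{N^s-q}$, whence every singular value of $\widetilde{S}$ equals $1$ when $N^s>q$, while $\widetilde{S}$ is empty (and the term vanishes) when $N^s=q$. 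In both cases $\| [0,\widetilde{S}] \|_2 = \sqrt{\min(1,N^s-q)}$, and combining the two estimates gives the claim.

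I do not expect a genuine obstacle here, as the argument is short and standard. The only points requiring a little care are the bookkeeping in the identity $[S,0]-X = [0,-\widetilde{S}]$ (correctly zero-padding $S$ so that the two $N^s\times N^s$ matrices are compatible), and the degenerate case $N^s=q$, where $\widetilde{S}$ disappears and the peculiar-looking factor $\sqrt{\min(1,N^s-q)}$ collapses to $0$; in essence this factor is simply the spectral norm of $\widetilde{S}$ written so as to also cover the empty case.
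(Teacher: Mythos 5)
Your proposal is correct and follows essentially the same route as the paper's proof: the same triangle-inequality split around $W = G^r[S,0]$, the Eckart--Young best-approximation property for the first term, and submultiplicativity plus the computation $\|[S,0]-X\|_2 = \|\widetilde{S}\|_2 = \sqrt{\min(1,N^s-q)}$ for the second. The only difference is that you spell out explicitly the ``direct calculation'' that the paper leaves to the reader, including the degenerate case $N^s=q$.
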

\begin{proof}
Using the triangle inequality, we get
$$\| P_\ell - G^r(X) \|_2 \leq \| P_\ell - G^r [S,0] \|_2 + \| G^r [S,0]  - G^r X \|_2.$$
The first term on the right-hand side is equal to $\sigma_{\ell+1}$ by the best approximation
properties of the SVD. The second term can be bounded as $\| G^r [S,0] - \mbG^r X \|_2\leq \| G^r \|_2 \| [S,0] - X \|_2$,
and a direct calculation of $\| [S,0] - X \|_2=\| [0,\widetilde{S}] \|_2$ leads to the result as $\widetilde{S}^\top\widetilde{S}=I_{N_s-q}$.
\end{proof}

Despite its very simple proof, Lemma \ref{lemma:randomPCS} allows us to describe the
quality of the created coarse space.
Larger values of $q$ and $\ell$ lead to a smaller error in 
the approximation of the image of $G$. Moreover, a smoother $G$ with good contraction properties,
namely $\|G \|_2 \ll 1$,
leads to a better approximation. Clearly, one can improve the approximation by enlarging $r$ at the cost
of extra subdomain solves. 

\subsection{Generating the coarse space by deep neural networks}\label{sec:deep_neural}
Theorem \ref{Ciaramella_mini_10_thm:perturb} shows that the spectral coarse space made by the first slowest eigenvector of $\mbG$ is not necessarily the one-dimensional coarse space minimizing $\rho(\mbT)$. 
Now, we wish to go beyond this one-dimensional analysis and optimize
the entire coarse space $V_c$ keeping its dimension $m$ fixed. This is equivalent to
optimize the prolongation operator $P$ whose columns span $V_c$. 
Thus, we consider the optimization problem
\begin{equation}\label{Ciaramella_mini_10_eq:optimization_problem}
\min_{P \in\mathbb{R}^{N^s\times m}} \rho(T(P)).
\end{equation}
To solve approximately \eqref{Ciaramella_mini_10_eq:optimization_problem}, we follow 
the approach proposed by \cite{katrutsa2017deep}.
Due to the Gelfand formula $\rho(T)=\lim_{k\rightarrow \infty} \sqrt[k]{\|T^k\|_F}$, we replace \eqref{Ciaramella_mini_10_eq:optimization_problem} with the simpler optimization problem $\min_{P} \|T(P)^k\|^2_F$ for some positive $k$. Here, $\|\cdot\|_F$ is the Frobenius norm. We then consider the unbiased stochastic estimator \cite{hutchinson1989stochastic}
\[\|T^k\|^2_F=\text{trace}\left((T^k)^\top T^k\right)=\mathbb{E}_{\mathbf{z}}\left[ \mathbf{z}^\top (T^k)^\top T^k  \mathbf{z}\right]=\mathbb{E}_{\mathbf{z}}\left[ \|T^k \mathbf{z}\|^2_2\right] ,\]
where $\mathbf{z}\in\mathbb{R}^{N^s}$ is a random vector with Rademacher distribution, i.e. $\mathbb{P}(\mathbf{z}_i=\pm 1)=1/2$. Finally, we rely on a sample average approach, replacing the unbiased stochastic estimator with its empirical mean such that \eqref{Ciaramella_mini_10_eq:optimization_problem} is approximated by
\begin{equation}\label{Ciaramella_mini_10_eq:optimization_problem_emp}
\min_{P \in\mathbb{R}^{N^s\times m}} \frac{1}{N}\sum_{i=1}^N\|T(P)^k \mathbf{z}_i\|^2_F, 
\end{equation}
where $\mathbf{z}_i$ are a set of independent, Rademacher distributed, random vectors.
The action of $T$ onto the vectors $\mathbf{z}_i$ can be interpreted as the feed-forward process of a neural net, where each layer represents one specific step of the two-level method, that is the smoothing step, the residual computation, the coarse correction and the prolongation/restriction operations. 
In our setting, the weights of most layers are fixed and given, and the optimization is performed only on the weights of the layer representing the prolongation step. The restriction layer is constraint to have as weights the transpose of the weights of the prolongation layer.
To solve \eqref{Ciaramella_mini_10_eq:optimization_problem_emp}, we rely on the stochastic gradient algorithm which requires at each iteration to compute $k$ times the action of $T$. This step is expensive as it is equivalent to perform $k$ iterations of the two-level method. Hence, the deep neural network approach is not computationally efficient to build coarse spaces, unless one considers an offline-online paradigm or in a many query context. We will use this approach in Section \ref{sec:num_exp} to go beyond the result of Theorem \ref{Ciaramella_mini_10_thm:perturb} and show numerically that given an integer $m$, a spectral coarse made by the first $m$ slowest eigenvectors of $G$ is not necessarily the asymptotic optimal coarse space of dimension $m$.

\section{Numerical experiments}\label{sec:num_exp}
This section is concerned with the numerical validation of the framework proposed in this manuscript.
We first consider a Poisson equation in 2D and 3D rectangular boxes and we show the convergence behavior of the S2S method with different coarse spaces and of the SHEM method (see, \cite{gander2017shem}).  In this simplified setting, we also report the computational time and memory storage requirements of the S2S and SHEM methods. We then solve a Poisson problem with many-subdomain decompositions and discuss a further way to build a substructured coarse space, that is, using the SHEM interface functions. Next, we focus on a diffusion problem with highly jumping coefficients and validate Theorem \ref{Ciaramella_mini_10_thm:perturb} showing how a perturbed coarse space can affect the convergence of the methods.

\subsection{Poisson equation in 2D and 3D rectangular boxes}
Let us consider a rectangular domain $\Omega=\Omega_1\cup \Omega_2$, where $\Omega_1=(-1,\delta)\times(0,1)$ and $\Omega_2=(-\delta,1)\times(0,1)$, and a Poisson equation $-\Delta u=f$ with homogeneous Dirichlet boundary condition on $\partial \Omega$. 
Given an integer $\ell \geq 2$, we discretize each subdomain with a standard second order finite difference scheme with $N_y=2^\ell -1$ points in the $y$ direction and $N_x=N_y$ points in the $x$ direction. The overlap has size $2\delta$ with $\delta=N_{ov}h$, where $h$ is the mesh size and $N_{ov}\in\mathbb{N}$. In our computations, we consider $f=1$ and initialize the iterations with a random initial guess.

Figure \ref{Fig:Poisson} shows the relative error decay for several methods.
Specifically, we compare the one-level parallel Schwarz method ($G_s$ (Schwarz) in the figures), a S2S method with a coarse space made by eigenfunctions of $G$ (S2S-$G$), a S2S method
with a coarse space made of eigenfunctions of the operators $G_j$ (S2S-$G_j$), a S2S method with a coarse space
obtained with the PCA procedure (S2S-PCA), a S2S method with coarse space obtained using deep neural networks (S2S-DNN), and the spectral volume method based on the SHEM coarse space (SHEM), see \cite{gander2015analysis}.
For the PCA coarse space, we average the relative error decay over 30 realizations and the parameters for the PCA procedure are $q=2 \, \text{dim}V_c$ and $r=2$, where $\text{dim}V_c$ is the desired size of the spectral coarse space. For the deep neural network approach, the parameters are $N=N^s$ and $k=4$.

Figure \ref{Fig:Poisson} shows that most the spectral methods have a very similar convergence. 
Indeed, we have numerically observed that the S2S-$G$, the S2S-$G_j$ and the SHEM methods have all the same spectral radius in this simplified setting.
We remark that the S2S-PCA method has on average the same convergence behavior of the other two-level methods, even tough sometimes it could be slightly 
different (faster or slower). The S2S-DNN method outperforms the others. In this particular setting, the eigenvalues of $G$ are $\lambda_j=\pm \sqrt{\mu_j}$, where $\mu_j>0,\; \forall j=1,\dots,N^s$ are the eigenvalues of $G_1=G_2$, and $A$ is symmetric. Hence, we are under the assumptions of point $(C)$ of Theorem \ref{Ciaramella_mini_10_thm:perturb}, and Figure \ref{Fig:Poisson} confirms that a spectral coarse space is not necessarily the coarse space leading to the fastest convergence.

As we claimed that the deep neural network approach is computationally expensive, it is worth remarking that the PCA approach builds a coarse space as efficient as the spectral ones performing $q\cdot r$ subdomains solves in parallel, instead of solving eigenvalue problems as required by all others two-level methods, either locally (as the S2S-$G_j$ and SHEM methods) or on the whole skeleton (as the S2S-$G$ method).

\begin{figure}
\includegraphics[scale=0.40]{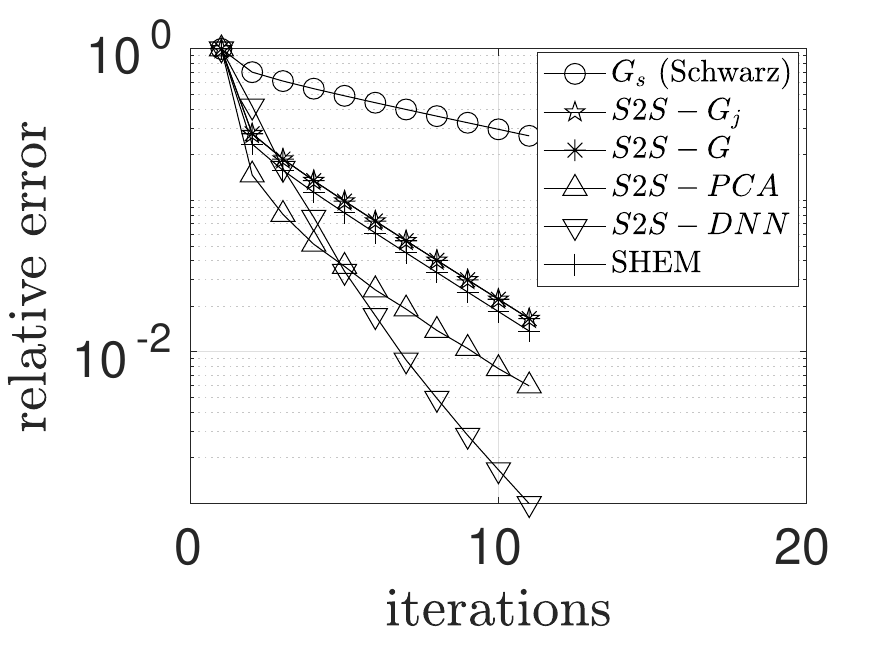}
\includegraphics[scale=0.40]{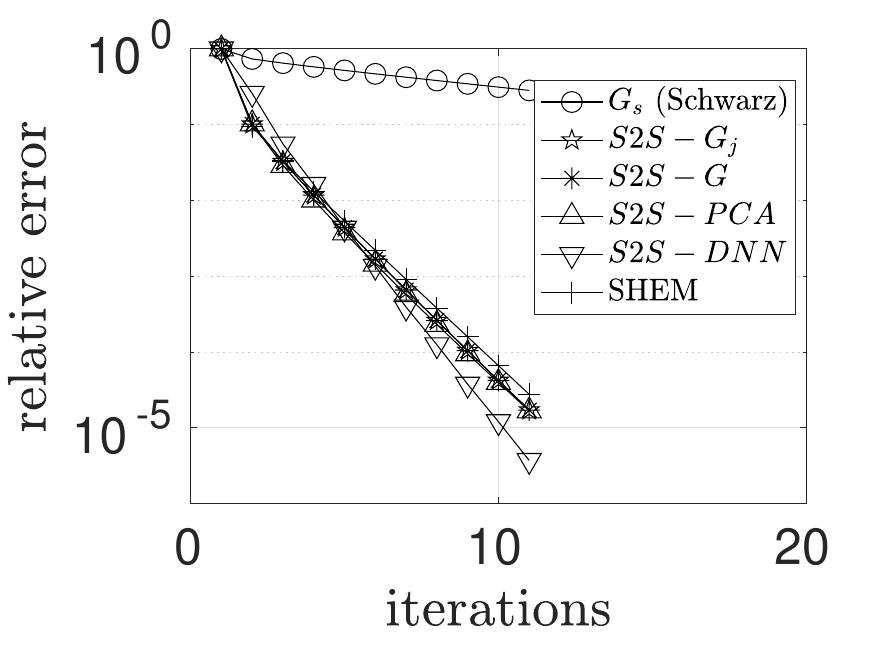}
\includegraphics[scale=0.40]{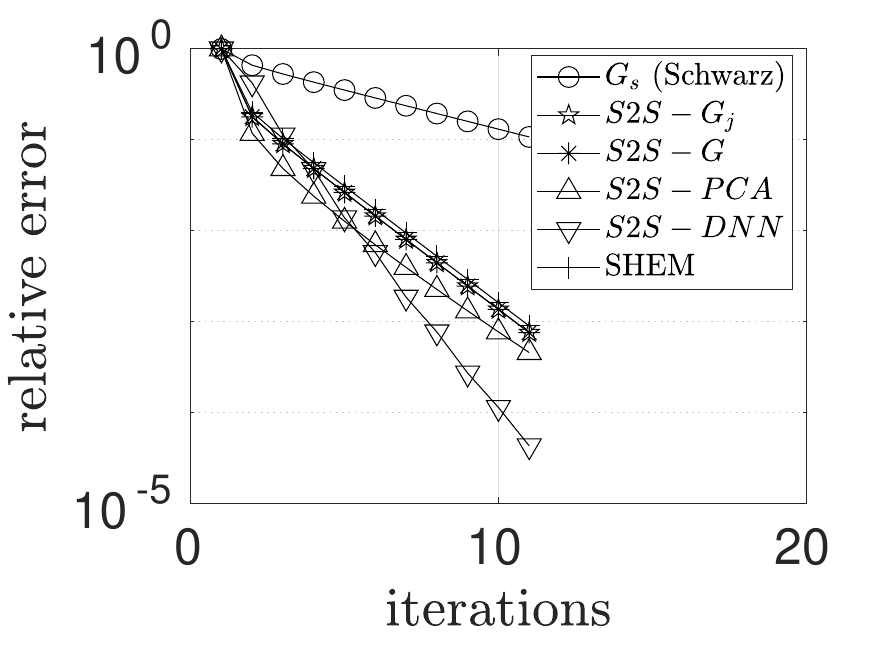}
\includegraphics[scale=0.40]{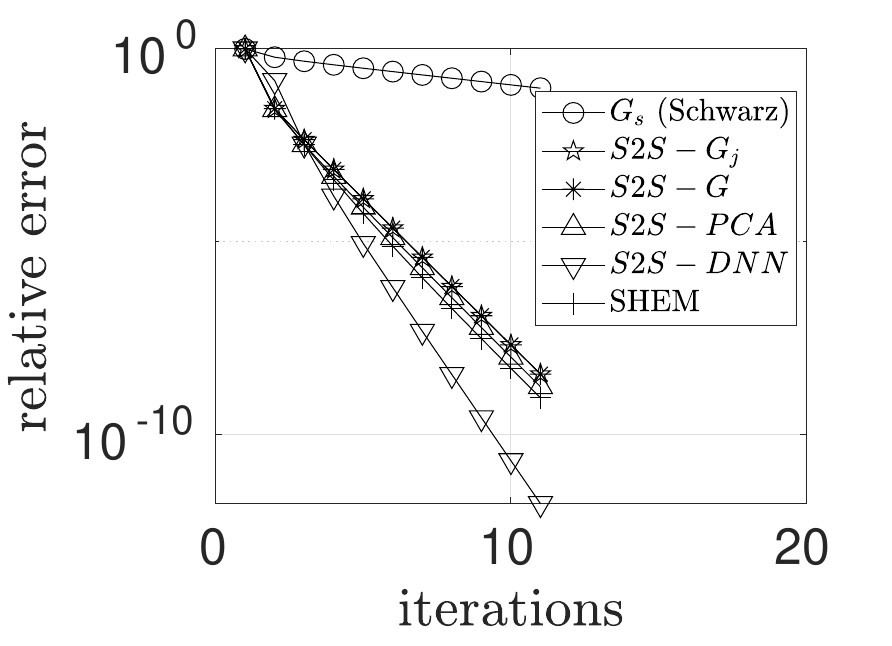}
\caption{Convergence curves for $\ell = 6$ which corresponds to 7875 unknowns. On the top row, $N_{ov} = 2$ while on the bottom row $N_{ov} = 4$. In the left panels $\text{dim}V_c=4$, while in the right panels $\text{dim}V_c=16$.}\label{Fig:Poisson}
\end{figure}

Next, we compare the computational costs required by the S2S method and a spectral volume method in Table \ref{table_cost}.
For simplicity we assume $n_1=1$, $n_2=0$. 
Let $A_v=M-N$ be a volume matrix of size $N^v\times N^v$ and $A$ be the substructured matrix of size $N^s \times N^s$, $P$ and $R$ the substructured restriction and prolongation operators, while $P_v$ and $R_v$ are the corresponding volume operators.
On each subdomain, we suppose to have $N_{\text{sub}}$ unknowns and $m$ is the dimension of the coarse space. The cost of the smoothing step is equal in both case to $\gamma_s(N_{\text{sub}})$, where $\gamma_s$ depends on the choice of the linear solver, e.g. for a Poisson problem, ${\gamma_s}(N_{\rm sub})=N_{\rm sub}\log(N_{\rm sub})$ if a
fast Poisson solver is used, 
or ${\gamma_s}(N_{\rm sub})=b N_{\rm sub}$
for sparse banded matrices with bandwidth $b$; see, e.g., \cite{govloan2013}.
Further, the cost of solving the coarse problem is identical as well, equal to  
$\gamma_c(m)$, where $m$ is the size of the coarse space and $\gamma_c$ depends on the linear solver used. The coarse matrices are usually small, fully dense, matrices so that it is reasonable to factorize them using an LU decomposition.
In a standard implementation, the S2S method requires to perform subdomain solves when computing the residual, as the matrix vector multiplication with the matrix $A$ is needed. To avoid this extra-cost, in the Appendix \ref{sec:appendix} we show two alternative algorithms to implement smartly the S2S method, where the residual is computed cheaply and the two applications of the smoothing operators per iteration are avoided. We further show that these two alternatives have the same convergence behavior of Algorithm \ref{two-level}.

The main advantage of the S2S method is that the restriction and prolongation operators are performed on the substructures, with objects which are smaller than the corresponding volume counterparts. Thus the S2S method naturally requires less memory storage. For instance, given a coarse space of dimension $m$, the restriction and prolongation operators are matrices of size $m\times N^s$ and $N^s\times m$ where $N^s$ is the number of unknowns on the substructures. The corresponding volume objects have size $m\times N^v$ and $N^v\times m$, where $N^v$ is the number of unknowns on the whole domain. Thus the S2S method presents advantages both from memory storage and from the computational time point of view, due to smaller number of floating point operations.


\begin{table}[]
\centering
\begin{small}
\begin{tabular}{l|l||l|l}
G2S &  G2S C.C. &  Volume two-level & Volume C.C.  \\ \hline
${\bf{v}}^{n+\frac{1}{2}}= G_h {\bf{v}}^{n} +{\bf b}_h$ & $O({\gamma_s}(N_{\text{sub}}))$ & ${\bf{u}}_v^{n+\frac{1}{2}}= N {\bf{u}}_v^{n} +M^{-1}{\bf b}_v$ & $O({\gamma_s}(N_{\text{sub}})$ \\
${\bf{r}}^{n+\frac{1}{2}}={\bf b}_h -A_h {\bf{v}}^{n+\frac{1}{2}}$&  $O({\gamma_c}(N_{\text{sub}}))$ & ${\bf{r}}_v^{n+\frac{1}{2}}={\bf b}_v -A_v {\bf{u}}_v^{n+\frac{1}{2}}$ & $O((N^v)^{\gamma_m})$ \\
${\bf{v}}_{c}^{n+1}=A_{2h}^{-1}(R{\bf{r}}^{n+\frac{1}{2}})$ & $O(\gamma_c(m))$ & ${\bf{u}}_{vc}^{n+1}=A_{vc}^{-1}(R_v{\bf{r}}_v^{n+\frac{1}{2}})$&  $O(\gamma_c(m))$\\
${\bf{v}}^{n+1}={\bf{v}}^{n+\frac{1}{2}} +P{\bf{v}}_{c}^{n+1}$ & $O(N^s)$ & ${\bf{u}}_v^{n+1}={\bf{u}}_v^{n+\frac{1}{2}} +P_v {\bf{u}}_{vc}^{n+1}$ & $O(N^v)$\\ \hline
\end{tabular}
\end{small}
\caption{Computational cost (C.C.) per iteration. Notice that the smoother in volume is written
as a standard stationary method based on the splitting $A_v=M-N$.}\label{table_cost}
\end{table}

We now discuss the cost of the off-line computation phases.
To build prolongation and restriction operators in the volume case, one needs to define some functions, usually by solving eigenvalue problems, along the interfaces between non-overlapping subdomains or in the overlap region between overlapping subdomains.
These functions are then extended in the interior of the subdomains and this extension costs $\gamma_s(N_{\text{sub}})$. Notice that the way of extending these functions is not unique way and we refer to \cite[Section 5]{Klawonn} for an overview. 
In the substructured framework, we have analyzed theoretically several ways among which a global eigenvalue problem (S2S-$G$), local eigenvalue problems (S2S-$G_j$), and randomized approaches using either PCA (S2S-PCA), or deep neural networks (S2S-DNN). The relative costs of these approaches with respect to the volume ones are difficult to estimate as they depend on the features of the problem at hand. 
Nevertheless, for any method used to generate the interface functions, we do not need to perform any extension step in the substructured framework.
Besides the approaches studied theoretically, we emphasize that one can use the interface functions computed in a volume method as a basis for the S2S coarse space. In this way one avoids the extension step and exploits at best the intrinsic substructured nature of the S2S method. In the next section we show numerical results where we used the SHEM interface functions as a basis for the S2S method (called the S2S-HEM method).
Finally, we observe the S2S-PCA approach is cheaper than any volume method if one sets, e.g., $r=1$, as the cost would be $\gamma_s(N_{\text{sub}})$ and thus cheaper then all volumes approaches since they require the additional cost of solving localized eigenvalues problems.  

To conclude, we consider a Poisson equation on a three-dimensional box $\Omega = (-1, 1) \times  (0, 1) \times 
(0, 1)$ decomposed into two overlapping subdomains $\Omega_1 = (-1, \delta) \times (0, 1) \times (0, 1)$ and $\Omega_2 =
(-\delta, 1) \times  (0, 1) \times  (0, 1)$. 
Table \ref{Tab:3D} shows the computational times to reach a relative error smaller than $10^{-8}$, and the computational memory required 
to store the restriction and interpolation operators in a sparse format in Matalb for the S2S method and the SHEM method. 
The experiments have been performed on a workstation with 8 processors Intel Core i7-6700 CPU 3.40GHz and with
32 GB of RAM.
\begin{table}[]
\centering
\begin{small}
\setlength{\tabcolsep}{5pt}
\begin{tabular}{ c | c c c }
$N_v$-\text{dim} $V_c$ &  6075-4 & 56699-16 &   488187-64  \\ \hline
S2S-$G$ & 0.1175 &  3.09  &  157.62  \\
SHEM & 0.1065  &  3.16 & 158.34 \\
\end{tabular}\quad
\begin{tabular}{ c | c c c }
$\text{dim} V_c$ &  6075-4 & 56699-16 & 488187-64  \\ \hline
S2S-$G$ & 0.0288  &  0.49 &  16.24  \\
SHEM & 0.77  & 14.51  & 749.84  \\
\end{tabular}
\end{small}
\caption{On the left, time in seconds required by the S2S-$G$ and SHEM methods to reach a relative error smaller than $10^{-8}$ for increasing number of unknowns $N_v$ and dimension of coarse space $V_c$. The overlap parameter is constant $N_{ov}=4$. On the right, memory usage expressed in megabyte to store the restriction and prolongation operators in the S2S and SHEM methods.}
\label{Tab:3D}
\end{table}
We remark that the S2S method requires drastically less memory than the SHEM method, which becomes inefficient for large problems from the memory point of view.
Concerning computational times, we observe that the two methods are equivalent in this setting. The substructured restriction and prolongation operators are faster than the volume ones, since to compute the action for instance of the substructured prolongation operator on the largest problem takes about $7\cdot 10^{-4}$ seconds compared to $3\cdot 10^{-2}$ seconds of the volume prolongation. However, the bottleneck here is represented by the two, very large, subdomain solves. A many subdomain decomposition and a parallel implementation on a high performance programming language should make more evident the advantage of using substructured coarse spaces in terms of computational time.

\subsection{Decomposition into many subdomains}
In this section, we consider a Poisson equation in a square domain $\Omega$ decomposed into $M\times M$ nonoverlapping square subdomains 
$\widetilde{\Omega}_j$, $j=1,...,M^2=N$. Each subdomain $\widetilde{\Omega}_j$ contains $N_{\text{sub}}:=(2^\ell-1)^2$ interior degrees of freedom.
The subdomains $\widetilde{\Omega}_j$ are extended by $N_{ov}$ points to obtain subdomains $\Omega_j$ which form an overlapping decomposition of $\Omega$. Each discrete local substructure is made by one-dimensional segments. Figure \ref{fig:subdomain} provides a graphical representation.

\begin{figure}
\centering
\begin{tikzpicture}[scale=0.15]
\draw[color=black] (10,20.5) node {$\Omega$};
\draw [thick,black] (1,1) rectangle (19,19);
\draw [thick,black] (1,7) rectangle (19,7);
\draw [thick,black] (1,13) rectangle (19,13);
\draw [thick,black] (7,1) rectangle (7,19);
\draw [thick,black] (13,1) rectangle (13,19);
\foreach \x in {2,3,4,5,6,7,8,9,10,11,12,13,14,15,16,17,18}
    \foreach \y in {2,3,4,5,6,7,8,9,10,11,12,13,14,15,16,17,18}
      {
        \draw (\x,\y) circle (0.2cm) ;
      }
\end{tikzpicture}\qquad
\begin{tikzpicture}[scale=0.15]
\draw[color=black] (10,20.5) node {$\Omega$};
\draw [thick,black] (1,1) rectangle (19,19);
\draw [thick,black] (1,7) rectangle (19,7);
\draw [thick,black] (1,13) rectangle (19,13);
\draw [thick,black] (7,1) rectangle (7,19);
\draw [thick,black] (13,1) rectangle (13,19);
\foreach \x in {2,3,4,5,6,7,8,9,10,11,12,13,14,15,16,17,18}
    \foreach \y in {2,3,4,5,6,7,8,9,10,11,12,13,14,15,16,17,18}
      {
        \draw (\x,\y) circle (0.2cm) ;
      }
\draw [thick,yellow] (1,1) rectangle (8,8);  
\draw [thick,violet] (6,6) rectangle (14,14); 
\draw [thick,green] (12,12) rectangle (19,19);      
\end{tikzpicture}\qquad 
\begin{tikzpicture}[scale=0.22]
\draw[color=black] (4,11) node {$\Omega_j$};
\draw [dashed,thick,black] (0,0) rectangle (8,8);
\draw [thick,violet] (-2,-2) rectangle (10,10);
\foreach \x in {1,2,3,4,5,6,7}
    \foreach \y in {1,2,3,4,5,6,7}
      {
        \draw (\x,\y) circle (0.2cm) ;
      }
\foreach \x in {-1,0,8,9}
    \foreach \y in {-1,0,1,2,3,4,5,6,7,8,9}
      {
        \draw [black] (\x,\y) circle (0.2cm) ;
      }
\foreach \y in {-1,0,8,9}
    \foreach \x in {-1,0,1,2,3,4,5,6,7,8,9}
      {
        \draw [black] (\x,\y) circle (0.2cm) ;
      }
  \draw [thick,blue] (-2,6) -- (10,6); 
  \draw [thick,blue] (2,-2) -- (2,10);
  \draw [thick,blue] (-2,2) -- (10,2);
  \draw [thick,blue] (6,-2) -- (6,10);    
\end{tikzpicture}
\caption{The domain $\Omega$ is divided into nine non-overlapping subdomains (left). The center panel shows how the diagonal non-overlapping subdomains are enlarged to form overlapping subdomains. On the right, we zoom on the central subdomain to show the local discrete substructure formed by the degrees of freedom lying on the blue segments.}
\label{fig:subdomain}
\end{figure}
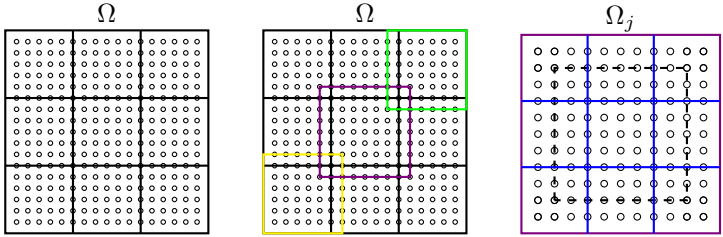

Figure \ref{Fig:S2S} compares several versions of the S2S method to the SHEM method. Specifically, we consider a S2S method with a coarse space made by eigenfunctions of $G$ (S2S-$G$), a S2S method with a coarse space obtained with the PCA procedure (S2S-PCA), and a S2S method with a coarse space which is inspired by the SHEM coarse space (S2S-HEM, that is S2S Harmonically Enriched Multiscale) and a S2S method with a coarse space obtained with the deep neural network approach (S2S-DNN).

In more detail, we create the HEM coarse space by computing harmonic functions and solving interface eigenvalue problems 
on each one-dimensional segment that forms the local discrete substructure. Let us recall that the SHEM coarse space is based on harmonic and spectral functions which are computed along the boundaries of a nonoverlapping decomposition, see \cite{gander2015analysis}. Then, the SHEM method extends these interface functions into the interior of the nonoverlapping subdomains as the method is naturally defined in volume. We do not need to perform this extra step of extending the functions in the neighboring subdomains. We report that we have also tried to build the a coarse space by simply restricting the volume functions of the SHEM coarse space onto the substructures and we observed a similar behavior compared to the HEM coarse space.
For the PCA approach, we generated $q=2\times$dim$V_c$  random vectors and we set $r=2$. The result we plot is averaged over 30 different random coarse spaces.
For the deep neural network, we used $k=4$ and $N=N^s$.

The size of the coarse space is set by the SHEM coarse space. In the top-left panel, we consider only multiscale functions without solving any eigenvalue problem along the interfaces. In the top-right panel, we include the first eigenfunctions on each interface, and on the bottom-central panel we include the first and the second eigenfunctions.
\begin{figure}
\centering
\includegraphics[scale=0.4]{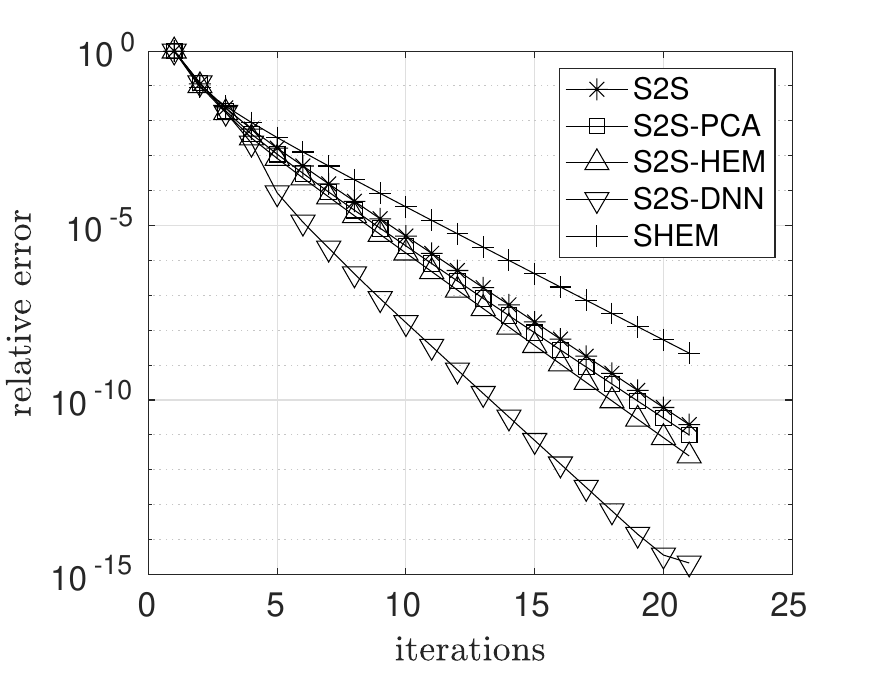}
\includegraphics[scale=0.4]{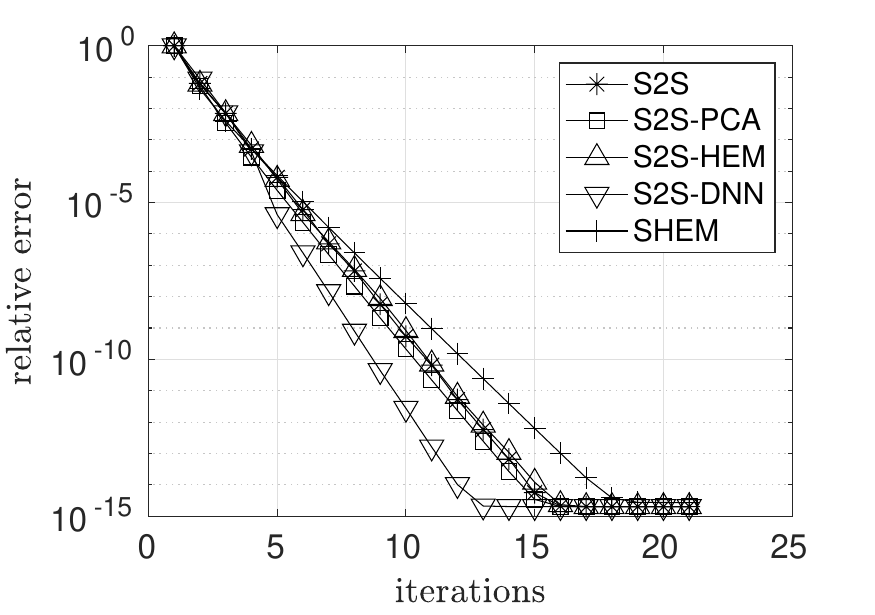}
\includegraphics[scale=0.4]{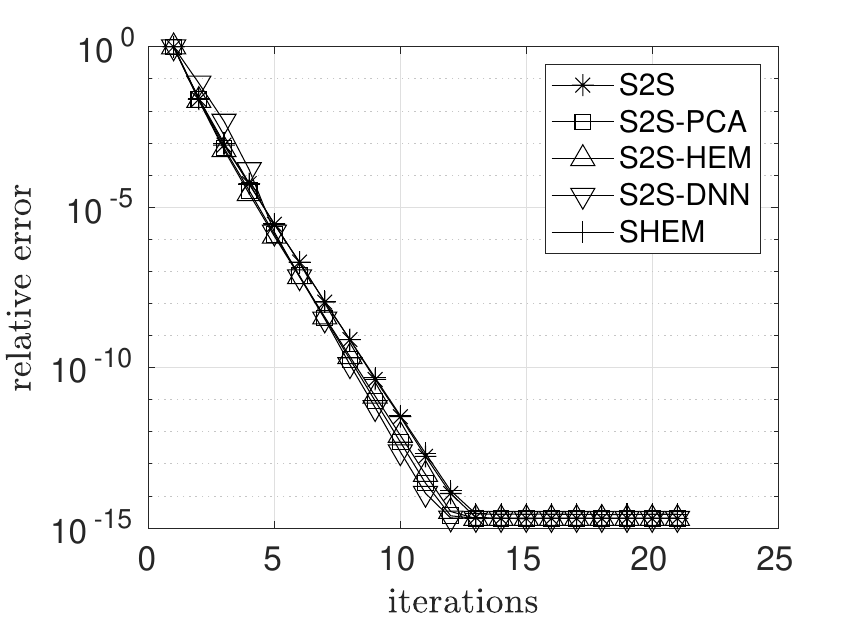}
\caption{Convergence behavior of the different methods for a Laplace equation with $N=16$, $\ell=4$ and $N_{ov}=2$. The dimension of the coarse space is 36 (top-left), 84 (top-right), 132 (bottom).}\label{Fig:S2S}
\end{figure}
In all cases we observe that the methods have a similar convergence, which is slightly faster for the substructured methods for smaller coarse spaces.
As we already remarked, S2S-$G$ is not necessarily the fastest.

\subsection{Diffusion problem with jumping diffusion coefficients}\label{sec:num:jump}	
In this paragraph, we test the S2S method for the solution of a diffusion equation 
$-\text{div}(\alpha \nabla u) = f$
in a square domain $\Omega:=(0,1)^2$ with $f:=\sin(4\pi x)\sin(2\pi y)\sin(2\pi xy)$. The domain $\Omega$ is decomposed into 16 non-overlapping subdomains and we suppose $\alpha=1$ everywhere except in some channels where $\alpha$ takes the values large values. Each non-overlapping subdomain is discretized with $N_{\text{sub}}=2^{2\ell}$ cells and enlarged by
$N_{ov}$ cells to create an overlapping decomposition with overlap $\delta=2N_{ov} h$. We use a finite-volume scheme and we assume that the jumps of the diffusion coefficients are aligned with the cell edges.
We consider two configurations represented in Figure \ref{fig:jumping}. 
\begin{figure}[]
\centering
\includegraphics[scale=0.28]{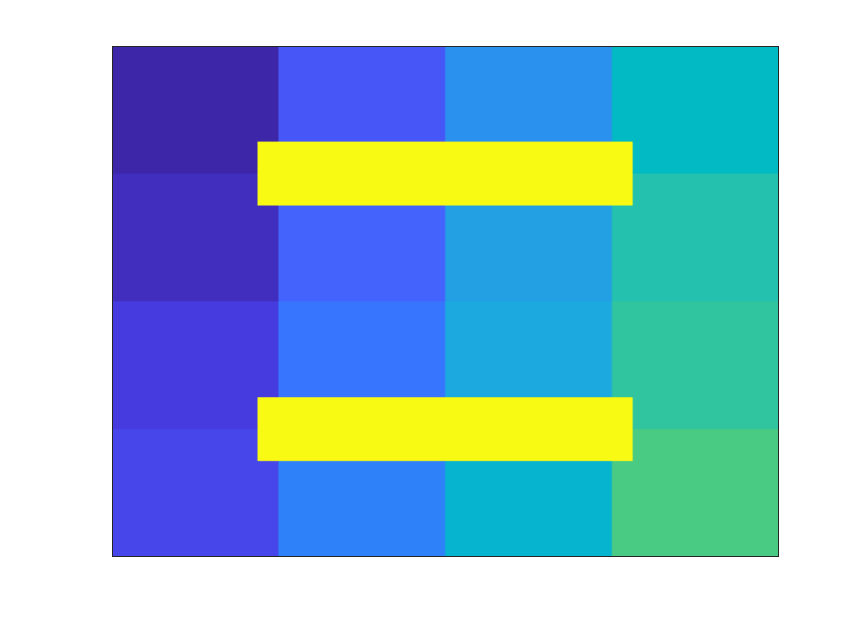}
\includegraphics[scale=0.28]{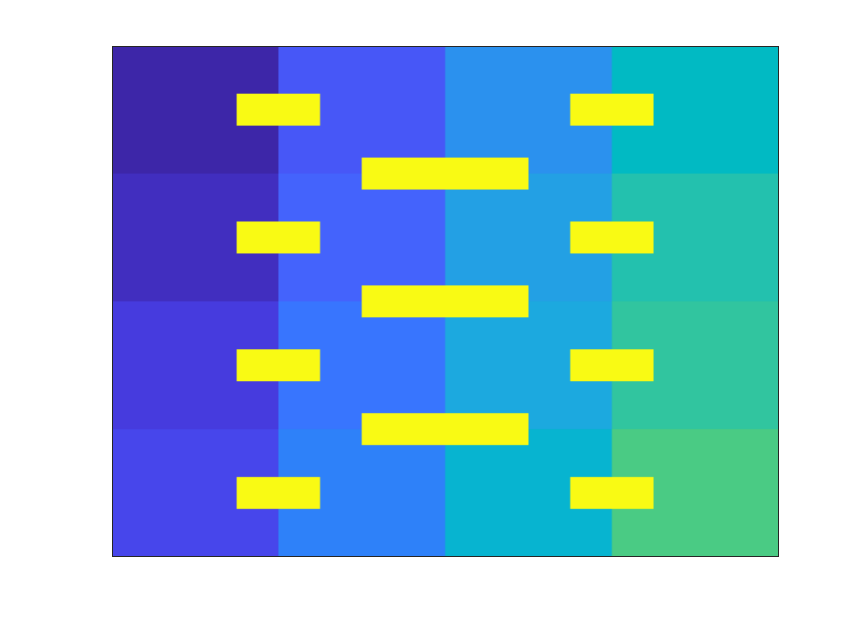}
\caption{Decomposition of $\Omega$ into 16 subdomains with two different patterns of channels.}
\label{fig:jumping}
\end{figure}

We first aim to validate numerically Theorem \ref{Ciaramella_mini_10_thm:perturb}. We consider the two channels configuration with $\alpha=5\cdot 10^3$, $\ell=4$, $N=4$. The first five eigenvalues are $\lambda_1=0.999$, $\lambda_2=-0.9989$, $\lambda_3=-0.99863$,$\lambda_4=0.99861$ and $\lambda_5=0.2392$.
We consider the coarse space $V_c:=\left\{ \psib_1+\varepsilon \psib_2\right\}$, where $\gamma=\langle \psib_1,\psib_2\rangle\approx 10^{-15}$ so that the two eigenvectors are orthogonal. As $\lambda_1$ and $\lambda_2$ have opposite signs, point (C) of Theorem \ref{Ciaramella_mini_10_thm:perturb} guarantees the existence 
of an $\widetilde{\varepsilon}$ such that $\rho(T(\widetilde{\varepsilon}))=|\lambda_3|<|\lambda_2|$.
Figure \ref{Fig:perturbedcoarsespace} confirms on the left panel that $|\lambda(\varepsilon,0)|$ reaches a zero for two values of $\varepsilon$. The right panel clearly shows that for several values of $\varepsilon$, $\rho(T(\varepsilon))=|\lambda_3|$. It is interesting to remark that, in this setting, choosing $\varepsilon=0$ (that is, a standard spectral coarse space) is actually the worse choice, as for any $\varepsilon\neq 0$ $\rho(T(\varepsilon))\leq |\lambda_2|$ as the proof of point (C) of Theorem \ref{Ciaramella_mini_10_thm:perturb} shows.

\begin{figure}[]
\centering
\includegraphics[scale=0.4]{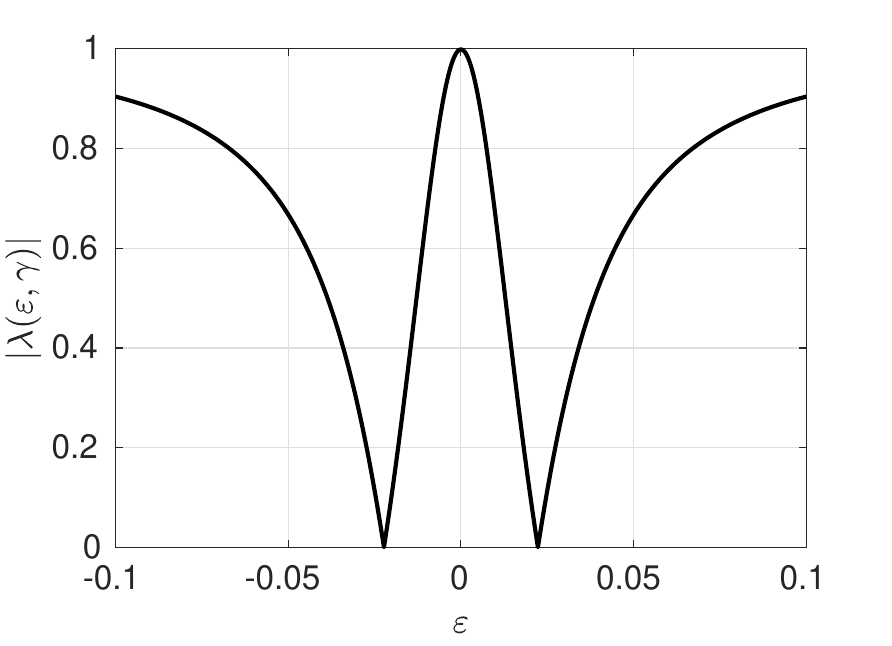}
\includegraphics[scale=0.4]{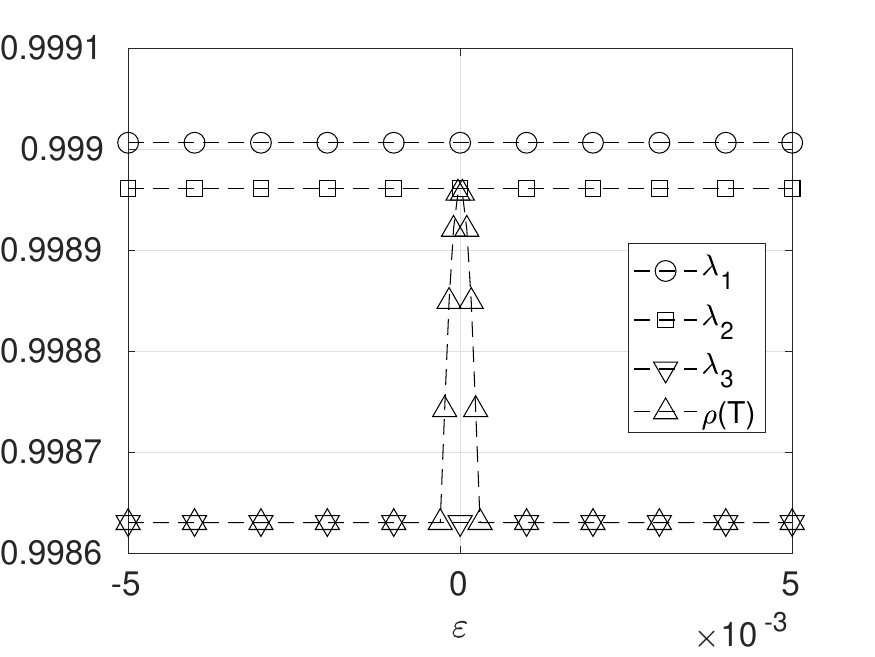}
\caption{Plot of the function $\varepsilon\mapsto |\lambda(\varepsilon,0)|$ on the left panel, comparison between $\rho(T(\varepsilon))$ and the first eigenvalues on the right panel.}
\label{Fig:perturbedcoarsespace}
\end{figure}

Second, we build the coarse space $V_c:=\left\{\psib_1+\varepsilon \psib_5\right\}$, where $\psib_i$ $i=1,5$ are the first and fifth eigenfunctions of $G$ and $\varepsilon=0.01$. Figure \ref{Fig:diverging} shows that the S2S method method with this specific choice of coarse space is diverging. Computing the spectral radius, we obtain $\rho(T)= 1.2322$. In this setting, we have $\gamma=-0.5628$  which replaced into the expression of $\lambda(\gamma,\varepsilon)$, together with the values of $\lambda_1$ and $\lambda_5$, leads to $\lambda(\gamma,\varepsilon)=1.2322=\rho(T)$. Indeed $\lambda(\gamma,\varepsilon)$ has a vertical asymptote in $\gamma^*=-0.1404$ as shown on the right panel of Figure \ref{Fig:diverging}. We can restore the convergence of the S2S method by a sufficient decrease of $\varepsilon$, that is by reducing the perturbation in the coarse space.
In a numerical implementation, this is obtained by performing $r\geq 1$ iterations of the smoother $G$ on the coarse space (see Corollary \ref{cor:corr_Vc}). Indeed it holds that
\[ G^r V_c=G^r\text{span}\left\{\psib_1+\varepsilon \psib_5\right\}=\text{span}\left\{\psib_1 +\frac{\lambda_5^r}{\lambda_1^r}\varepsilon \psib_5\right\}.\]
Applying twice the smoother in the case at hand, we get a new ``smoothed'' coarse space where the perturbation has size $\varepsilon^*=\frac{\lambda_5^2}{\lambda_1^2}\varepsilon=5.73\cdot 10^{-4}$ so that now $\lambda(\gamma,\varepsilon^*)=-0.0080$. We remark that $\lambda(\gamma,\varepsilon^*)$ is the convergence factor of $T$ on $\text{span}\left\{\psib_1+\varepsilon^* \psib_5\right\}$, so that the convergence of the S2S method is now determined by second largest eigenvalue of $T$, i.e. $\lambda_2=-0.9990$ as Figure \ref{Fig:diverging} shows.

\begin{figure}[]
\centering
\includegraphics[scale=0.4]{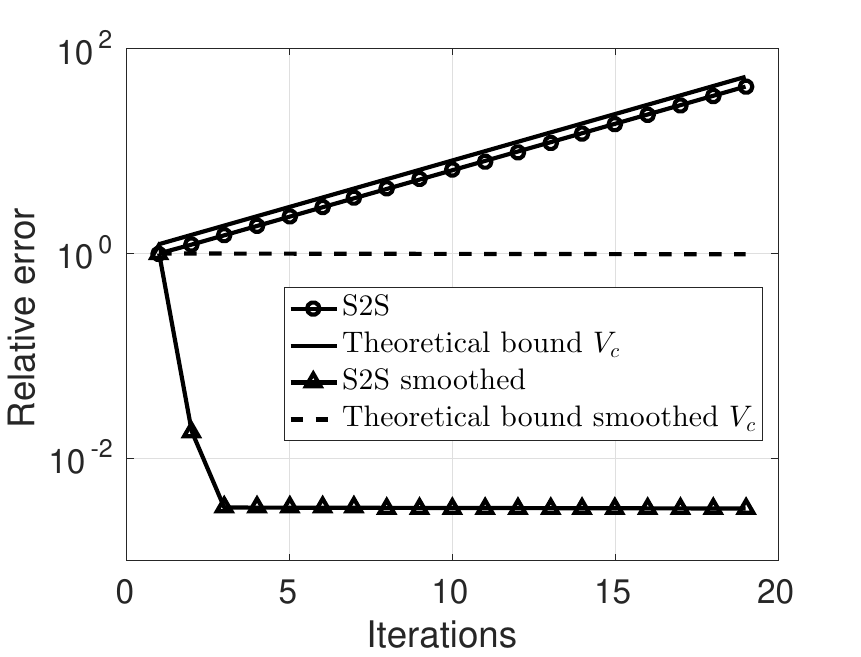}
\includegraphics[scale=0.4]{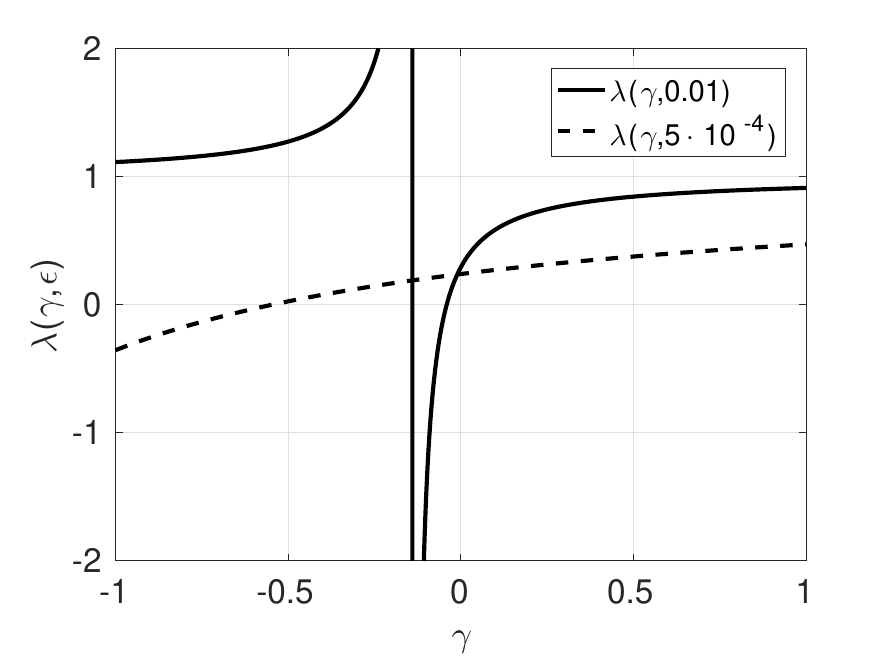}
\caption{Convergence curves for a S2S method with the coarse space $V_c:=\left\{\psib_1+\varepsilon \psib_5\right\}$ (left) and plot of the function $\lambda(\gamma,0.01)$ (right).}
\label{Fig:diverging}
\end{figure}

We then investigate the performances of the S2S methods and we compared them with the SHEM coarse space in the multiple channel configuration. We set $\ell=4$, $N=16$, which correspond to $N^v=4096$ degrees of freedom, and $N_{ov}=2$. Table \ref{tab:S2S} shows the number of iterations to reach a relative error smaller than $10^{-8}$ for the S2S-$G$, S2S-PCA, S2S-HEM and SHEM methods.
The relative error is computed with respect to the first iterate.
We consider coarse spaces of dimensions 84, 132 and 180, which, for the SHEM and S2S-HEM methods, correspond to multiscale coarse spaces enriched by respectively the first, second and third eigenvectors of the interface eigenvalues problems. For the PCA coarse space, we set $q=2N_c$ and $r=6$ if $\alpha=10^6$, $r=4$ if $\alpha=10^4$ and $r=2$ if $\alpha=10^2$. We remark that for smaller values of $r$, the S2S-PCA method diverges. This increase in the value of $r$ can be explained noticing that for the multichannel configuration, the smoother $G$ has several eigenvalues approximately 1 for large values of $\alpha$. Thus the PCA procedure, which essentially relies on a power method idea to approximate the image of $G$, suffers due to the presence of several clustered eigenvalues, and hence does not provide accurate approximations of the eigenfunctions of $G$.
Similarly also the HEM coarse space obtained by solving on each segment of a skeleton an eigenvalue problem could lead to a divergent method. Thus, to improve this coarse space, we apply few iterations of the smoother to obtain a better $V_c$.
Tables \ref{tab:S2S} and \ref{tab:S2S_krylov} report the number of iterations to reach a tolerance of $10^{-8}$ when the algorithms are used either as stationary methods or as preconditioners. We remark that all spectral methods have very similar performance, and all methods are robust with respect to the strength of the jumps.

\begin{table}[]
\centering
\begin{small}
\setlength{\tabcolsep}{5pt}
\begin{tabular}{| c | c | c | c | c|}
\hline
$\alpha$ &  S2S-$G$ & S2S-PCA &  S2S-EHM & SHEM  \\ \hline
$10^2$ & 11-9-7  &  14-8-7 &  15-8-7& 16-10-8  \\
$10^4$ &  11-9-7 &  13-9-7 & 10-9-7 & 16-10-8 \\
$10^6$ & 12-10-8 &  18-9-7 &  12-9-8 & 16-10-8\\ \hline
\end{tabular}
\begin{tabular}{| c | c | c | c | c|}
\hline
$\alpha$ &  S2S-$G$ & S2S-PCA &  S2S-EHM & SHEM  \\ \hline
$10^2$ & 10-9-7  &  11-8-7 &  15-9-7& 12-12-8  \\
$10^4$ &  10-9-7 &  11-9-7 & 11-10-7 & 12-12-8 \\
$10^6$ & 10-9-7 &  14-8-7 &  11-10-7 & 13-10-7\\ \hline
\end{tabular}
\end{small}
\caption{For each spectral method and value of $\alpha$, we report the number of iterations to reach a relative error smaller than $10^{-8}$ with a coarse space of dimension 84 (left), 132 (center) and 180 (right).
The discretization parameters are $N_v=4096 $ and $N_{ov}=2$. The left table refers to a two-channels configuration, the right table refers to the multiple channel configuration depicted in Figure \ref{fig:jumping}.}
\label{tab:S2S}
\end{table}

\begin{table}[]
\centering
\begin{small}
\setlength{\tabcolsep}{5pt}
\begin{tabular}{| c | c | c | c | c|}
\hline
$\alpha$ &  S2S-$G$ & S2S-PCA &  S2S-EHM & SHEM  \\ \hline
$10^2$ & 7-5-4  &  8-5-4 &  7-5-4 & 7-6-4  \\
$10^4$ &  5-5-4 &  6-5-5 & 5-5-4 & 7-6-4 \\
$10^6$ & 5-5-3 &  5-4-3 &  5-4-4 & 7-6-4\\ \hline
\end{tabular}
\begin{tabular}{| c | c | c | c | c|}
\hline
$\alpha$ &  S2S-$G$ & S2S-PCA &  S2S-EHM & SHEM  \\ \hline
$10^2$ & 7-5-4  &  8-6-5 &  8-6-4 & 7-6-4  \\
$10^4$ &  7-7-4 &  8-6-5 & 7-7-5 & 7-5-5 \\
$10^6$ & 8-6-6 &  9-6-6 &  7-7-6 & 7-5-5\\ \hline
\end{tabular}
\end{small}
\caption{Number of iterations performed by GMRES preconditioned by different methods and for several value of $\alpha$ to reach a relative error smaller than $10^{-8}$ with a coarse space of dimension 84 (left), 132 (center) and 180 (right).
The discretization parameters are $N_v=4096 $ and $N_{ov}=2$. The left table refers to a two-channels configuration, the right table refers to the multiple channel configuration depicted in Figure \ref{fig:jumping}.}
\label{tab:S2S_krylov}
\end{table}

\section{Conclusions}\label{sec:conclusions}
In this work we introduced a new computational framework of two-level substructured DD methods.
This is called S2S and is based on coarse spaces defined exclusively on some interfaces provided by the decomposition of the domain. 
We presented a broader convergence analysis for two-level iterative methods, which covers the proposed substructured framework as a special case.
The analysis pushes forward the current understanding of asymptotic optimality of coarse spaces.
From the computational point of view, we have discussed approaches based on the PCA and deep neural networks for the numerical computation of efficient coarse spaces.
Finally, the effectiveness of our new methods is confirmed by extensive numerical experiments, where
stationary elliptic problems (with possibly highly jumping diffusion coefficients) are efficiently solved.

\section{Appendix}\label{sec:appendix}

In this Appendix, important implementation details
of our substructured two-level methods are discussed. We reformulate Algorithm \ref{two-level} in equivalent forms that are computationally more efficient.
This is essential to make our methods computationally equal or more efficient than other
existing strategies.

As already remarked in Section \ref{sec:num_exp}, a naive implementation of Algorithm \ref{two-level} 
would lead to a quite expensive method as the computation of the residual involves a matrix multiplication with $A$, 
which requires to perform subdomain solves. Hence, one would need two subdomain solves per iteration.
To avoid this extra cost, we use the special form of the matrix $A=I-G$
and propose two new versions of Algorithm \ref{two-level}. 
These are called S2S-B1 and S2S-B2 and
given by Algorithm \ref{two-level-B1} and Algorithm \ref{two-level-B2}.
\begin{figure}[t]
\begin{minipage}{0.45\textwidth}
\def\NoNumber#1{{\def\alglinenumber##1{}\State #1}\addtocounter{ALG@line}{-1}}
\begin{algorithm}[H]
\setlength{\columnwidth}{\linewidth}
\caption{S2S-B1}
\vspace*{-1mm}
\begin{algorithmic}[1]\label{two-level-B1}
\REQUIRE ${\bf u}^{0}$. \\
\STATE ${\bf u}^{1}=G{\bf u}^{0} +{\bf b}$,  \\
\STATE ${\bf v}=G{\bf u}^{1}$, \\
\STATE ${\bf r} = {\bf b}-{\bf u}^{1}+{\bf v}$, \\
\STATE ${\bf{d}}=A_c^{-1}R{\bf r}$,  \\
\STATE ${\bf u}^{0} = {\bf u}^1 + P{\bf{d}}$,  \\
{\bf Iterations}:
\STATE ${\bf u}^{1}={\bf v} +\widetilde{P}{\bf d} +{\bf b}$,\\ 
\STATE ${\bf v}=G{\bf u}^{1}$,\\
\STATE ${\bf r} = {\bf b}-{\bf u}^{1}+{\bf v}$, \\
\STATE ${\bf{d}}=A_c^{-1}R{\bf r}$,  \\
\STATE ${\bf u}^{0} = {\bf u}^{1} + P{\bf{d}}$,\\
\STATE Repeat from 6 to 10 until convergence.
\end{algorithmic}
\end{algorithm}
\end{minipage}
\begin{minipage}{0.45\textwidth}
\vspace*{-24mm}
\begin{algorithm}[H]
\setlength{\columnwidth}{\linewidth}
\caption{S2S-B2}
\begin{algorithmic}[1]\label{two-level-B2}
\REQUIRE ${\bf u}^{0}$ and set $n=1$, \\
\STATE ${\bf v}=G{\bf u}^{0}$,  \\
\STATE ${\bf r} = {\bf b}-{\bf u}^{0}+{\bf v}$, \\
\STATE ${\bf{d}}=A_c^{-1}R{\bf r}$,  \\
\STATE ${\bf u}^{1} = {\bf v} + \widetilde{P}{\bf{d}} + {\bf b}$ ,  \\
\STATE Set ${\bf u}^{0}={\bf u}^{1}$ and repeat from 1 to 5 until convergence.
\end{algorithmic}
\end{algorithm}
\end{minipage}
\end{figure}
The relations between S2S, S2S-B1 and S2S-B2 are given in the following theorem.

\begin{theorem}[Equivalence between S2S, S2S-B2 and S2S-B1]\label{Th:equivalence_S2S-B1-B2}
\text{ }\\
\begin{itemize}
\item[{\rm (a)}] Algorithm \ref{two-level-B1} generates the same iterates of Algorithm \ref{two-level}.
\item[{\rm (b)}] Algorithm \ref{two-level-B2} corresponds to the stationary iterative method ${\bf u}^n=G(\mathbb{I}-PA_c^{-1}RA){\bf u}^{n-1} + \widetilde{M}{\bf b},$
where $G(\mathbb{I}-PA_c^{-1}RA)$ is the iteration matrix and $\widetilde{M}$ the relative preconditioner.
Moreover, Algorithm \ref{two-level-B2} and Algorithm \ref{two-level-B1} have the same convergence behavior.
\end{itemize}
\end{theorem}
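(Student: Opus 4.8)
The plan is to handle the three assertions by direct algebraic rewriting, exploiting the two structural identities that motivate the compact algorithms: the splitting $A = \mathbb{I} - G$ and the definition $\widetilde{P} := GP$ of the precomputed operator appearing in Algorithms \ref{two-level-B1} and \ref{two-level-B2}.

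For part (a), I would argue by induction on the outer iteration counter, showing that the update ${\bf u}^0$ produced at the end of each cycle of Algorithm \ref{two-level-B1} coincides with the one produced by Algorithm \ref{two-level} run with $n_1=1$, $n_2=0$. Two facts must be verified. First, that the quantity computed in Steps~3 and~8 is genuinely the residual ${\bf b}-A{\bf u}^1$: since ${\bf v}=G{\bf u}^1$ and $A=\mathbb{I}-G$, we have ${\bf b}-{\bf u}^1+{\bf v}={\bf b}-(\mathbb{I}-G){\bf u}^1={\bf b}-A{\bf u}^1$, so the coarse solve in Step~4 (resp.\ Step~9) reproduces that of Algorithm \ref{two-level}. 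Second, that the fused Step~6 reproduces the pre-smoothing applied to the corrected iterate: writing the corrected iterate from the previous cycle as ${\bf u}^0={\bf u}^1+P{\bf d}$, the next smoothing step of Algorithm \ref{two-level} gives $G{\bf u}^0+{\bf b}=G{\bf u}^1+GP{\bf d}+{\bf b}={\bf v}+\widetilde{P}{\bf d}+{\bf b}$, which is exactly Step~6. Since ${\bf v}$ is already available from the previous cycle, Algorithm \ref{two-level-B1} avoids recomputing $G{\bf u}^1$, yet produces the identical sequence.

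For the first assertion of part (b), the plan is a one-line substitution. In Algorithm \ref{two-level-B2} one has ${\bf v}=G{\bf u}^0$, ${\bf r}={\bf b}-A{\bf u}^0$ (again by $A=\mathbb{I}-G$), and ${\bf d}=A_c^{-1}R{\bf r}$. Inserting these into Step~4 and using $\widetilde{P}=GP$ yields
\[
{\bf u}^1 = G{\bf u}^0 + GPA_c^{-1}R({\bf b}-A{\bf u}^0)+{\bf b} = G(\mathbb{I}-PA_c^{-1}RA){\bf u}^0 + (\mathbb{I}+GPA_c^{-1}R){\bf b},
\]
so that Algorithm \ref{two-level-B2} is exactly the stationary method with iteration matrix $G(\mathbb{I}-PA_c^{-1}RA)$ and relative preconditioner $\widetilde{M}=\mathbb{I}+GPA_c^{-1}R$.

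The most conceptual step is the final claim that Algorithms \ref{two-level-B1} and \ref{two-level-B2} share the same convergence behavior. By part (a) together with \eqref{twolevelrecurrence} specialized to $n_1=1$, $n_2=0$, the iteration matrix of Algorithm \ref{two-level-B1} is $T_{\mathrm{B1}}=(\mathbb{I}-PA_c^{-1}RA)G$, while the computation above gives $T_{\mathrm{B2}}=G(\mathbb{I}-PA_c^{-1}RA)$. Setting $B:=\mathbb{I}-PA_c^{-1}RA$, these are $BG$ and $GB$, and the plan is to invoke the fact that $BG$ and $GB$ have the same nonzero spectrum. This can be shown exactly as in Lemma \ref{lemma:spectrum}: if $GB{\bf w}=\lambda{\bf w}$ with $\lambda\neq 0$, then $(BG)(B{\bf w})=\lambda(B{\bf w})$ and $B{\bf w}\neq 0$ (else $\lambda{\bf w}=0$ forces ${\bf w}=0$), so $(B{\bf w},\lambda)$ is an eigenpair of $BG$, and the symmetric argument gives $\sigma(BG)\setminus\{0\}=\sigma(GB)\setminus\{0\}$. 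Hence $\rho(T_{\mathrm{B1}})=\rho(T_{\mathrm{B2}})$, i.e.\ the two schemes have identical asymptotic convergence factors. The only subtlety requiring care is that $B$ is an oblique projection ($B$ is idempotent because $PA_c^{-1}RA$ is) and $G$ need not be invertible, so one cannot conclude by a similarity transformation; the nonzero-spectrum coincidence is precisely what is available, and it suffices since a possible extra zero eigenvalue never enlarges the spectral radius.
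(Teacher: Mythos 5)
Your proof is correct and follows essentially the same route as the paper: the same algebraic verification that Algorithm \ref{two-level-B1} reproduces the iterates of Algorithm \ref{two-level} using $A=\mathbb{I}-G$ and $\widetilde{P}=GP$, the same one-line substitution giving the iteration matrix $G(\mathbb{I}-PA_c^{-1}RA)$ for Algorithm \ref{two-level-B2}, and the same conclusion via the fact that $(\mathbb{I}-PA_c^{-1}RA)G$ and $G(\mathbb{I}-PA_c^{-1}RA)$ share nonzero eigenvalues. You are somewhat more thorough than the paper --- keeping ${\bf b}$ general (thus exhibiting $\widetilde{M}=\mathbb{I}+GPA_c^{-1}R$ explicitly) and proving the nonzero-spectrum coincidence rather than citing it as a footnote --- but these are refinements of the same argument, not a different one.
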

\begin{proof} 
 For simplicity, we suppose to work with the error equation and thus ${\bf b}=0$.
We call $\widetilde{{\bf u}}^0$ the output of the first five steps of Algorithm \ref{two-level-B1} and with $\widehat{{\bf u}}^0$ the output of Algorithm \ref{two-level}. Then given a initial guess ${\bf u}^0$, we have
\begin{equation*}
\widetilde{{\bf u}}^0 = {\bf u}^1 + P{\bf d} 
= {\bf u}^1 + PA_c^{-1}R(-{\bf u}^1 + {\bf v})
= G{\bf u}^0 + PA_c^{-1}R(-AG{\bf u}^0)
=(\mathbb{I}-PA_c^{-1}RA)G{\bf u}^0=\widehat{{\bf u}}^0.
\end{equation*}
Similar calculations show that also steps 6-10 of S2S-B1 are equivalent to an iteration of \ref{two-level}.
For the second part of the Theorem, we write the iteration matrix for Algorithm \ref{two-level-B2} as
\[ {\bf u}^{1}= {\bf v} +\widetilde{P}{\bf d}=G {\bf u}^0 + GPA_c^{-1}R(-A{\bf u}^0)=G(\mathbb{I}-PA_c^{-1}RA){\bf u}^0.\]
Hence, Algorithm \ref{two-level-B2} performs a post-smoothing step instead of a pre-smoothing step as 
Algorithm \ref{two-level-B1} does. The method still has the same convergence behavior since the matrices 
$G(\mathbb{I}-PA_c^{-1}RA)$ and $(\mathbb{I}-PA_c^{-1}RA)G$ have the same
eigenvalues\footnote{Given two matrices $A$ and $B$, $AB$ and $BA$ share the same non-zero eigenvalues.}.
\end{proof}

Notice that Algorithm \ref{two-level-B1} requires for the first iteration
two applications of the smoothing operator $G$, namely two subdomains solves.
The next iterations, given by Steps 6-10, need only one application
of the smoothing operator $G$. Theorem \ref{Th:equivalence_S2S-B1-B2} {\rm (a)}
shows that Algorithm \ref{two-level-B1} is equivalent to Algorithm \ref{two-level}.
This means that each iteration after the first one of Algorithm \ref{two-level-B1}
is computationally less expensive than one iteration of a volume two-level DD method.
Since two-level DD methods perform generally few iteration, it could be important to
get rid of the expensive first iteration. For this reason, we introduce
Algorithm \ref{two-level-B2}, which overcome the problem of the first iteration.
Theorem \ref{Th:equivalence_S2S-B1-B2} {\rm (b)} guarantees that Algorithm \ref{two-level-B2}
is exactly an S2S method with no pre-smoothing and one post-smoothing step.
Moreover, it has the same convergence behavior of Algorithm \ref{two-level-B1}.

We wish to remark that, the reformulations S2S-B1 and S2S-B2 require to store the
matrix $\widetilde{P}:= GP$, which is anyway needed in the assembly phase of the coarse matrix,
hence no extra cost is required, if compared to a volume two-level DD method.
Finally, we stress that these implementation tricks can be readily generalized to a general number
of pre- and post-smoothing steps.

\bibliographystyle{siamplain}
\bibliography{references}

\end{document}